\documentclass[12pt]{amsart}
\usepackage[a4paper,margin=26mm]{geometry}

\usepackage[T1]{fontenc}
\usepackage[utf8]{inputenc}  % inutile si vous compilez avec LuaLaTeX ou XeLaTeX

\usepackage{lmodern}

\usepackage{amsmath,amssymb}
\usepackage{mathtools}        % charge amsmath en interne, donc redondant mais inoffensif
\usepackage{bm}               % bold math

\usepackage{microtype}        % après les polices, avant hyperref

\usepackage{graphicx}
\usepackage{xcolor}
\usepackage{hyperref}
\definecolor{darkgreen}{rgb}{0,0.4,0}
\definecolor{BrickRed}{rgb}{0.65,0.08,0}
\hypersetup{colorlinks=true} %It is ugly to have the hyperlinks surrounded by a square, this line solves it.
\hypersetup{linkcolor=blue,citecolor=darkgreen,filecolor=BrickRed,urlcolor=darkgreen} 

\usepackage{comment}

\newtheorem{theorem}{Theorem}

\newtheorem{lemma}{Lemma}

\newtheorem{definition}{Definition}

\numberwithin{equation}{section}

\begin{document}
\title{Combinatorics and Asymptotics of\\ Positive Systems of Linear Catalytic Equations}

% 'Each author has his or her own set of coordinates.'
\author{Cyril Banderier}
%\email{} %no email to avoid automatic spams by robots
\urladdr{https://lipn.fr/{\string~}banderier}
\address{CNRS/Universit\'e Sorbonne Paris-Nord, Villetaneuse, France.}
\author{Michael Drmota}
%\email{} %no email to avoid automatic spams by robots
\urladdr{https://www.dmg.tuwien.ac.at/drmota/}
\address{Institut f\"ur Diskrete Mathematik und Geometrie, TU Wien, Wien, Austria.}
%

% 'MSC classification, keywords and grant acknowledgements'
\subjclass[2010]{Primary: 05A15, Secondary: 05A16.}
\date{\today} 
\keywords{lattice paths, linear catalytic equation, singularity analysis}
\thanks{The first author is supported by the Danube project 
(Scientific \& Technological Cooperation, Austrian/French/Czech OeAD WTZ project, PHC Danube 2025-2026)
and the second author by the Austrian Science Fund (FWF), projects P35016 and 10.55776/F1002}

\begin{abstract}
We provide a complete combinatorial and asymptotic analysis of 
positive linear systems of equations in one catalytic variable that
appear in several combinatorial problems such as in lattice path 
counting or stack-sortable permutation counting.

We show that the corresponding generating functions satisfy a positive polynomial system of equations 
(which is associated to a context-free grammar). 
Furthermore we prove a universal asymptotic behaviour.
\end{abstract}

\maketitle

\begin{center}
  \begin{minipage}[c]{0.8\linewidth}
    % table of contents depth
     \setcounter{tocdepth}{1}
    \tableofcontents  
  \end{minipage}  
\end{center}

\section{Introduction}

Many combinatorial structures resist a direct simple enumeration, 
while, paradoxically, attempting to enumerate a problem that is, a priori,  harder
(a refinement of these structures taking into account one more parameter) 
often helps to get a recursive description which yields a solvable functional equation for the corresponding bivariate generating function.  This additional parameter requires an additional variable (that we will denote by $u$) in the functional equation, 
which is thus called a \textit{catalytic variable}, and the corresponding functional equation is called a \textit{catalytic equation}:
\[
P(x,u,F(x,u),F(x,0)) = 0.
\]
This terminology, natural in chemistry, was popularized in combinatorics in particular by Zeilberger~\cite{Zeilberger2000}.

This is, for example, an approach which was used successfully by Tutte for the enumeration of planar maps~\cite{Tutte1963} and by Knuth for the enumeration of permutations sortable via a stack~\cite{Knuth1}.
Solving such catalytic equations can seem, at first glance, to be quite a challenge; it requires ad-hoc methods 
such as the quadratic method of Tutte and Brown for maps~\cite{BrownTutte},
or the kernel method of Knuth for permutations and lattice paths.
These methods were later generalized beyond equations of degree 2
(see e.g.~\cite{BousquetMelouJehanne2006, BousquetMelouPerkovsek2000, BanderierFlajolet2002, BanderierBousquetMelouDeniseFlajoletGardyGouyouBeauchamps1999}), and to systems \cite{AsinowskiBacherBanderierGittenberger,NotarantonioYurkevich}, 
while also allowing the study of various parameters and structures \cite{AsinowskiBanderier, BanderierGittenberger2006, AguechBanderier}.

The purpose of this contribution is to prove a complete combinatorial and asymptotic analysis of 
positive \emph{linear} systems of equations in \emph{one} catalytic variable. 
We show that the corresponding generating functions satisfy a 
so-called positive polynomial system of equations (which corresponds to a context-free grammar)
and that the coefficients satisfy (under natural conditions) a universal asymptotic behaviour.
This provides a broadly framed response to the asymptotic question raised by Knuth~\cite[Section 2.2.1, Exercise 12]{Knuth1}.

\subsection{Example for a linear catalytic equation}
\label{subsec1.1}

As a first example, let us consider the equation associated to Knuth's seminal problem (we refer to~\cite[Section 2.2.1, Exercises 4 and 11]{Knuth1} for the presentation
of the combinatorial and computer science motivations behind this equation):
\begin{equation}\label{eqfirstexample}
F(x,u) = 1 + x u F(x,u) + x \frac{F(x,u)-F(x,0)}{u},
\end{equation}
where we assume that $F(x,u)$ is a power series -- and it is an easy exercise to show that
there is actually a unique solution with nonnegative coefficients.
If we just multiply this equation by $u$ and set then $u=0$ then we get $0=0$ and, thus, no information.
However, if we regroup~\eqref{eqfirstexample} as
\[
\left( 1 - xu - \frac xu\right) F(x,u) = 1 - x\frac{F(x,0)}{u},
\]
then it is natural to assume that $x$ and $u$ are related by the relation (or kernel equation)
\begin{equation}\label{eqfirsturelation}
 1 - xu - \frac xu = 0 \quad\mbox{or}\quad  u = x(1+u^2).
\end{equation}
Formally we can write $u=u(x)$ as the solution of the equation $u = x(1+u^2)$
(with $u(0) = 0$\footnote{The other solution of the equation (\ref{eqfirsturelation}) does not lead to
a power series solution of (\ref{eqfirstexample}).}) which can be explicitly computed to
$u(x) = ({1-\sqrt{1-4x^2}})/(2x)$.

If we restrict ourselves to $x>0$ und $u> 0$ then $F(x,u) > 0$ and we are left with the relation
\[
1- x\frac{F(x,0)}{u(x)} = 0
\]
and we can compute $F(x,0)$ and $F(x,u)$ as
\[
F(x,0) = \frac{u(x)}{x} = \frac{1-\sqrt{1-4x^2}}{2x^2}
\quad\mbox{and}\quad 
F(x,u) = \frac{1 - x\frac{F(x,0)}{u}}{ 1 - xu - \frac xu}.
\]
Since these functions are power series they constitute the unique solution of (\ref{eqfirstexample}).

Actually, this problem is just a rephrasing of a simple lattice-path counting problem. Let $F_k(x)$ be the generating function of lattice paths (where the variable $x$ marks the total number of steps)
that start at the origin~$(0,0)$ and end at~$(n,k)$, where only up-steps~$(1,1)$ and down-steps~$(1,-1)$ are allowed and all paths stay above the $x$-axis
(these are the ubiquitous \textit{Dyck paths}).
They clearly satisfy the infinite system of equations, 
as seen by splitting any Dyck path before its last step:
\[\begin{cases}
    F_0(x) = 1 + F_1(x) x, \\
    F_1(x) = F_0(x) x + F_2(x) x, \\
    F_2(x) = F_1(x) x + F_3(x) x, \\
%    F_3(x) = F_2(x) x + F_4(x) x, \\
    \phantom{ F_3(x)\,\,\,}\vdots
\end{cases}\]
Clearly, by multiplying the $k$-th equation by $u^k$ and summing it up, we directly obtain~\eqref{eqfirstexample} with $F(x,u) = \sum_{k\ge 0} F_k(x) u^k$.
It is classical that Dyck paths can be directly counted in many ways but note that in addition to the above \emph{infinite} system,
we also have a \emph{finite positive} system (a context-free grammar). Indeed, let $A_0(x)$ denote the
generating function of paths from $(0,0)$ to $(n,0)$ with $n> 0$ that stay strictly 
above the $x$-axis in between. Then we have
\begin{equation}
%\begin{cases}
    A_0(x)= x F_0(x) x,   \qquad 
    F_0(x) = 1 + F_0(x)A_0(x) 
%    \end{cases}
    \label{eqF0}
\end{equation}
and consequently $F_0(x) = (1-\sqrt{1-4x^2})/(2x^2)$ (as before).

We note that $F_0(x)$ and consequently all functions $F_k(x)$ have a 
common dominating square-root singularity at $x_0 = \pm \frac 12$.
Our article shows that these claims hold in fact not just for this simple equation associated to the Dyck case,
but, in full generality, for \emph{systems} of linear catalytic equations of \emph{higher order}.

\subsection{Systems of linear catalytic equations}

In order to define the general problem, we consider the $\ell$-th difference of a function $G(x,u) = \sum_{k\ge 0} G_k(x) u^k$ (with $\ell \ge 1$) as
\[
\Delta^\ell G(x,u) = \frac{G(x,u) - G_0(x) - G_1(x)u - \cdots - G_{\ell-1}(x) u^{\ell-1}}{u^\ell}
= \sum_{k\ge 0} G_{k+\ell}(x) u^k.
\]
Furthermore, we set $\Delta^0 G(x,u) = G(x,u)$.

\begin{definition}
    We say that a set of $d\ge 1$ generating functions $F_1(x,u), \ldots, F_d(x,u)$ 
    satisfies a positive linear system of polynomial catalytic equations of order $L\ge 1$ in one
    catalytic variable $u$ if there are polynomials $P_{i}(x,u)$ and $Q_{i,j,\ell}(x,u)$ 
    $1\le i,j \le d$, $0\le \ell \le L$, with nonnegative coefficients 
    such that
\begin{equation}\label{eqcatalyticsystem}
    F_i(x,u) = P_{i}(x,u) + x \sum_{j=1}^d \sum_{\ell =0}^L Q_{i,j,\ell}(x,u) \Delta^\ell F_j(x,u), \qquad (1\le i \le d).
\end{equation}  
\end{definition}

It is known that such systems of equations can always be solved (see \cite{AsinowskiBacherBanderierGittenberger, NotarantonioYurkevich})
by an extended kernel method (the easiest case being Equation~\eqref{eqfirstexample}). This method shows that all solutions are algebraic functions. 
Thus, it is always possible to get a complete asymptotic analysis for the coefficients $[x^n] F_{i,0}(x)$ by using a proper singularity analysis.
However, if we focus on positive systems (as defined here), it turns out that we
are in a quite special situation. First, the solution function is also the solutions
of a positive polynomial system (Theorem~\ref{Th1}) and, second, there is a universal asymptotic behaviour
in the strongly connected case, where all dominant singularities are of square-root type (Theorem~\ref{Th2}).

This was already proved in the case of one linear equation 
for order $L\le 2$ (see \cite{DrmotaHainzl2023}).
Thus, our results are not only a far-reaching generalization of the results of \cite{DrmotaHainzl2023}
but provide a complete solution for systems of linear catalytic equations.

The main feature of this paper is to present a solution procedure that avoids 
the kernel method but rephrases the problem into a proper weighted lattice-path counting problem. 
In particular, this lattice path problem can then be solved with the help of a strongly connected positive non-linear polynomial system (similarly as for $A_0$ and $F_0$ in the above example). 
Since the solutions of strongly connected positive non-linear polynomial systems have a common dominant square-root singularity we obtain our main result.

If the system is not strongly connected, then the transfer to a positive nonlinear polynomial system still applies, and we also get a characterization of the analytic structure.

We note that the corresponding problem for non-linear systems of catalytic equations is not solved yet. 
Only the case of one equation could be handled (see~\cite{DrmotaNoyYu2022,  DrmotaHainzl2023, SchaefferDuchi2025} for the cases $L=1$ and $L=2$ and~\cite{ContatCurien2025} for all $L\ge 1$).

\section{Results}

We say that a generating function $F(x)$ corresponds to a \textit{finite grammar} 
or, equivalently, satisfies a \textit{positive polynomial system} if there is a polynomial system of equations
$F_i(x) = R_i(x,F_1(x),\ldots, F_d(x))$, $1\le i \le d$,
where $R_i(x,y_1,\ldots,y_d)$ are polynomials with nonnegative coefficients and
where $F(x) = F_1(x)$ is the first component of the (unique) power series solutions.

The first result provides a very strong link to systems of linear catalytic systems.
\begin{theorem}\label{Th1}
    Suppose that $F_i(x,u) = \sum_{k\ge 0} F_{i;k}(x) u^k$ is a solution of a positive linear system of polynomial catalytic equations of the form~\eqref{eqcatalyticsystem}. 
    Then for each $k\ge 0$ the function $F_{i;k}(x)$ corresponds to a finite grammar.
\end{theorem}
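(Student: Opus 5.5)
We plan to reinterpret the system~\eqref{eqcatalyticsystem} as a weighted lattice-path problem with finitely many "colours", and then build a finite grammar by decomposing paths at their minima, as was done for $A_0$ and $F_0$ in the Dyck example.

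\textbf{Step 1: combinatorial model.} Write $P_i(x,u)=\sum_m p_{i,m}(x)u^m$ and $Q_{i,j,\ell}(x,u)=\sum_m q_{i,j,\ell,m}(x)u^m$. Extracting the coefficient of $u^k$ in \eqref{eqcatalyticsystem} gives the infinite positive system
\[
F_{i;k}(x)=p_{i,k}(x)+x\sum_{j,\ell,m} q_{i,j,\ell,m}(x)\,F_{j;k-m+\ell}(x),
\]
where the sum runs over $m\le k$. Hence $F_{i;k}$ is the generating function of paths on states $(i,k)\in\{1,\dots,d\}\times\mathbb Z_{\ge0}$. Such a path starts with an initial weight $p_{j,h}$ at some state $(j,h)$. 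It then performs steps $(j,h)\to(i,h-\ell+m)$ of weight $x\,q_{i,j,\ell,m}(x)$, which are allowed only when $h-\ell\ge 0$, that is, when the state after the step has level at least $m$. The condition "$h\ge\ell$" is the only place where the boundary enters. Since $\ell\le L$ and $m\le \deg Q$, the jumps are bounded: the down-jumps are at most $L$ and the up-jumps are at most $D:=\max\deg_u Q$. Every step carries a factor $x$, so for each fixed $x$-degree there are only finitely many paths, and the system has a unique power-series solution.

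\textbf{Step 2: reduction to excursions with bounded boundary data.} The constraint $h\ge\ell$ means that a step of type $\ell$ may be used only from levels $\ge \ell$. Equivalently, every path must stay at levels $\ge0$, and steps with large $\ell$ are forbidden near the bottom. This is a lattice path with bounded jumps whose step set depends on the level only in the strip $0\le h<L$. We therefore introduce finitely many unknowns. For states $a=(j,r)$ and $b=(i,s)$ with $r,s\in\{0,\dots,L+D\}$ (the relative levels), let $E_{a,b}(x)$ count paths that go from $a$ to $b$ and never go below $\min(r,s)$ relative to their starting base. Here the path is shifted high enough that the boundary constraint is irrelevant, except for the finitely many boundary-sensitive variants near level $0$, which we treat as separate unknowns $E^{(0)}_{a,b}$ for each base level $0,\dots,L-1$. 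The standard first-passage decomposition then applies. For bounded jumps, one cuts an excursion at the last visit to its minimum level, or at the first step, and splits the remaining path at the successive first returns to each intermediate level. This expresses each $E_{a,b}$ as a polynomial with nonnegative coefficients in $x$, the $q$'s, and the other $E$'s, exactly as in the Dyck relation $F_0=1+F_0A_0$. This is the Duchon / Banderier–Flajolet type decomposition of excursions, generalized to a finite set of colours.

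\textbf{Step 3: assembling $F_{i;k}$.} A path counted by $F_{i;k}$ starts at some $(j,h)$ with $h\le\deg P$ and ends at level $k$. We decompose it at the last visits to the levels below $\max(h,k)$, or at its overall minimum. This writes $F_{i;k}$ as a finite positive polynomial in the $p_{j,h}$ and the finitely many excursion and boundary unknowns. Together with the equations from Step 2, this is a positive polynomial system, so each $F_{i;k}$ is its first component.

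The \textbf{main obstacle} is Step 2. The goal there is a finite, well-founded positive system whose unique power-series solution is the intended one. Two difficulties arise. First, a jump of size up to $L$ can skip levels, so the naive "first return to level $h$" is not defined. We would handle this by using "first passage below $h$" together with a record of the exact landing level among $h-1,\dots,h-L$, tracking the colour $j$ as well. Second, the boundary rule must be encoded consistently. We expect that using unknowns indexed by (colour, overshoot) pairs closes the system. Well-foundedness, and hence uniqueness of the solution, holds because every nontrivial piece contains at least one factor $x$, so the system is proper.
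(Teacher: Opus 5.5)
Your Steps 1 and 3 are sound and agree with the paper's setup. Your time-reversed orientation, with the monomials of $P_i$ as starting points, even replaces the paper's reduction to $P_i=1$ via linearity and Lemma~\ref{Le2.3}.

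The gap is Step~2. It carries the entire content of Theorem~\ref{Th1}, and you leave it at ``we expect that \dots\ closes the system''. Concretely, two things are missing: a decomposition that is well defined when jumps skip levels, and a proof that it produces only finitely many unknowns.

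The paper does the first with prime walks and reverse prime walks. A prime walk $A_{k,i}$ goes from level $k$ to $k+i$ and stays strictly above $k+i$ in between. A reverse prime walk $\overline A_{k,i}$ goes from $k$ to $k-i$ and stays strictly above $k$ in between. By Lemma~\ref{Le2.2}, each of these is either a single step or first step $\times$ a confined path $\times$ last step. Each confined path $F^{[\ge k]}_{k_1,k_2}$ is cut at its first return to a level $\le k_1$, or, if there is none, at the first visit to its minimum after the first step. Note that the middle piece of a prime walk is confined above a level that may lie strictly below both of its endpoints: the walk steps to $k+i_1$, then stays $\ge k+i+1$ until $k+i_2$. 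So a family whose floor is tied to $\min(r,s)$, as your $E_{a,b}$ is, does not close under this decomposition. The natural fix is to carry the floor as an independent parameter, as in $F^{[\ge k]}_{k_1,k_2}$.

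The decisive point is the finiteness check, Lemma~\ref{Lefinitesystem1}. Because the step sets are constant from the stabilisation level on, two things hold. Confined paths with a high floor depend only on their heights above the floor. Prime walks based at high levels coincide with those based at the stabilisation level.

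The paper then verifies term by term, in its orientation, two facts. First, every $F$ on a right-hand side reduces to one with $k_1<L+J$, $k_2<2J$, $k\le\min(J,k_1,k_2)$, since the heights above the floor are bounded by the maximal jump sizes. Second, every prime walk is zero, an explicit polynomial, or one of finitely many unknowns $A_{k,i}$, $\overline A_{k,i}$ with $k\le J$.

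Only once this bounded window is established, and the decomposition is shown never to leave it, do you get a finite positive polynomial system rather than an infinite one. Your ``(colour, overshoot)'' indexing points in the right direction. As written, however, the closure — that is, the theorem itself — is asserted rather than proved.
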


It has been shown with different approaches by Popescu (via Artin approximation~\cite{Popescu86}) 
and by Bousquet-M\'elou and Jehanne (via algebraic elimination~\cite{BousquetMelouJehanne2006})
% (see \cite{BousquetMelouJehanne2006, Popescu86} %CastroJimenezPopescuRond2019}) 
that all solutions of (systems of) polynomial catalytic equations are algebraic. Thus, all singularities are algebraic. 
Our article shows that it is possible to get rid of the minus sign involved by design in many of these catalytic equations, and that one then gets a \emph{positive} polynomial system. 
Now, functions solutions of such positive systems have the property that their dominant singularities are quite restricted:
Banderier and Drmota have shown in \cite{BanderierDrmota2015} that the exponents $\alpha$ of the Puiseux expansions at the dominant singularities can only be
of the form $\alpha =  2^{-k}$ for some $k\ge 1$ or of the form $\alpha =  -m 2^{-k}$
for some $m\ge 1$ and $k \ge 0$.
%\[
%\alpha \in \{ 2^{-k} : k \ge 1 \} \cup  \{ -m 2^{-k} :  \}
%\] 
What it is more, many systems have a  strongly connected graph of dependence (in particular, it is the case when the system has a single equation);
it implies a dominant square-root singularity ($\alpha = \frac 12$).
This is what Flajolet and Sedgewick called the Drmota--Lalley--Woods theorem (we refer to the books~\cite{Drmota2009} and~\cite{FlajoletSedgewick2009}
for more on this theorem).

%This singularity occurs for a single (non-linear) equation as well as for so-called %strongly connected systems. 
Combining these results, we obtain a quite universal asymptotic behaviour for positive linear systems of polynomial  \emph{catalytic} equations (under natural combinatorial conditions).
\begin{theorem}\label{Th2}
    Suppose that the functions $F_i(x,u)$, $1\le i\le d$ are the solutions of a positive linear system of polynomial catalytic equations of the form~\eqref{eqcatalyticsystem},
    where the corresponding infinite linear system for the functions $[u^k] F_i(x,u)$ (for $1\le i\le d$ and $k\ge 0$) is strongly connected.
    Then all solution functions $[u^k] F_i(x,u)$ have a common dominating square-root singularity.
\end{theorem}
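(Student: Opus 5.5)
We deduce Theorem~\ref{Th2} from the grammar built for Theorem~\ref{Th1}, checking that it inherits strong connectivity from the infinite linear system, and then apply the Drmota--Lalley--Woods theorem. Concretely, we read the infinite linear system for $F_{i;k}(x)$, $1\le i\le d$, $k\ge 0$, as a weighted lattice-path model. States are pairs $(i,k)$ with $k$ playing the role of a height. A term $x\,Q_{i,j,\ell}(x,u)\Delta^\ell F_j$ contributes to $F_{i;k}$ via $F_{j;k-m+\ell}$ for each monomial $u^m$ in $Q_{i,j,\ell}$. So every step changes the height by a bounded amount: at most $L$ down and at most $\deg_u Q$ up. The polynomials $P_i$ act as starting weights.

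In this picture $F_{i;k}$ is a sum over weighted paths ending at state $(i,k)$. We decompose paths by first passages and last visits to levels, as in the Dyck example with $A_0$ and $F_0$. Let $A^{(h)}_{i,j}$ be the generating function of excursions from type $i$ at height $h$ to type $j$ at height $h$ that stay strictly above $h$ in between. By translation invariance above height $L$, these series, together with the finitely many bounded-height boundary variants, satisfy a finite positive polynomial system. This system comes from the standard decomposition of an excursion into its first step up, sub-excursions at intermediate levels, and a final step down; since jumps are bounded, only finitely many ``arches'' types occur. Then $F_{i;k}$ is a positive polynomial, or a quasi-inverse via products of matrices of type $(I-M)^{-1}$ that we re-express by adding equations, in these quantities. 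This is the system of Theorem~\ref{Th1}, which we take as given.

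Next we show that the strong connectivity of the infinite dependency graph (between states $(i,k)$) transfers to the dependency graph of the finite positive system. Every excursion variable should depend on every other one, and each $F_{i;k}$ should depend on them all. The plan is as follows. Strong connectivity gives, for any two states, a path of steps between them. Inserting such connecting loops into an excursion of one type produces an excursion of any other type, lifted to large enough height if necessary. Hence in the equations each excursion variable eventually contains every other one. We must also check that the system is nonlinear, i.e.\ not affine in the unknowns, and aperiodic or at least has a well-defined radius. Nonlinearity holds because strong connectivity forces both up and down steps, so excursions concatenate quadratically. We also verify that the radius $\rho$ of the unknowns is finite and that the system is well-defined at $\rho$, using positivity. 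The Drmota--Lalley--Woods theorem then gives a common square-root singularity at $\rho$ for all excursion functions.

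Finally we transfer the singularity to $F_{i;k}$. Each $F_{i;k}$ is expressed through the excursion functions by a positive system in which it sits downstream of the strongly connected core. The quasi-inverse part must not create an earlier polar singularity. This is where I expect the main obstacle: one must show that the spectral radius of the transfer matrix of the linear ``last visit'' decomposition stays below $1$ up to $\rho$. If it reached $1$ earlier, we would get a polar singularity instead of a square root. The approach I would try first is to embed the $F_{i;k}$ themselves as variables of one strongly connected positive polynomial system, using that by strong connectivity every $F_{i;k}$ feeds back into the excursions. Then Drmota--Lalley--Woods applies to the enlarged system directly and yields a common dominating square-root singularity for all $[u^k]F_i(x,u)$.
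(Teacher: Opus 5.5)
\textbf{There is a genuine gap, at the step where you transfer strong connectivity to the finite system.} This is exactly the step the paper has to work for. Your reduction to a prime-walk grammar, and your final appeal to Drmota--Lalley--Woods, match the paper's overall plan.

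In the grammar behind Theorem~\ref{Th1}, any variable living at height $h\ge 1$ is built only from paths that never go below $h$. This covers $A_{k,i}$, $\overline A_{k,i}$ and $F^{[\ge k]}_{k_1,k_2}$ with $k\ge 1$. So its equation can never contain $A_{0,0}$ or $F^{[\ge 0]}_{0,0}$ (or their matrix analogues in the multi-type case). The level-$0$ quantities therefore always sit at the top of the dependency DAG, however strongly connected the infinite linear system is. The paper's example shows this: the core $\{A_{1,0},F^{[\ge 1]}_{1,1},F^{[\ge 1]}_{1,2}\}$ never sees $A_{0,0}$, while $F^{[\ge 0]}_{0,0}=1/(1-A_{0,0})$.

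Your argument ``insert connecting loops, lifted to large enough height if necessary'' gives an injection of path sets. That is a coefficientwise inequality, not an edge in the dependency graph of the polynomial system. Lifting is also not neutral here. A term $u^j\Delta^\ell$ can only be used at heights $\ge j$, so the step sets grow with the level and stabilise at height $J=\deg_u Q$, not at $L$ as you wrote. Lifting therefore only gives inequalities in one direction.

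As a result, your fix for the obstacle you correctly identify rests on a false premise. The claim that ``by strong connectivity every $F_{i;k}$ feeds back into the excursions'' does not hold. Nothing in your proposal prevents $A_{0,0}(x)$ from reaching $1$ before, or exactly at, the singularity of the core. That would give a polar singularity, or one of exponent $-\frac12$, instead of a square root.

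\textbf{The missing idea is the monotonicity $\mathcal S_k\subseteq\mathcal S_{k+1}$ of the step sets.} It lets you write
\[
F^{[\ge k]}_{k,k}=F^{[\ge 0]}_{0,0}+D_1+\cdots+D_k,\qquad A_{k,0}=A_{0,0}+B_1+\cdots+B_k,
\]
with $D_j,B_j$ having nonnegative coefficients. Subtracting the equations of consecutive levels gives positive polynomial equations for the new unknowns, for instance
\[
D_j=B_j\,(F^{[\ge 0]}_{0,0}+D_1+\cdots+D_{j-1})+(A_{0,0}+B_1+\cdots+B_j)\,D_j.
\]
Rewriting the higher-level equations in these variables creates the missing feedback edges, for example $A_{0,0}=x\,(F^{[\ge 0]}_{0,0}+D_1)\,x$. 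This produces a strongly connected positive system that contains $F^{[\ge 0]}_{0,0}$, and Drmota--Lalley--Woods then applies to it.

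Only after this does a path-insertion argument like yours become useful. The paper uses it purely as a domination statement: if $g=Pf+R$, then $f$ is dominated by $g$. With connecting paths this shows that $F^{[\ge k]}_{k_1,k_2}$ and $F^{[\ge k]}_{k,k}$ dominate each other, so they share the square-root singularity.
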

Furthermore, we also get \textit{universal asymptotic expansions} for the coefficients that are of type 
\[
[x^n] F_{i;k}(x) \sim c_m \rho^{-n} n^{-3/2},
\]
for $n \equiv m \bmod M$ and some constant $c_m$ that depends on $m$ (with $m\in \{0,1, \ldots, M-1\}$),
compare with~\cite{BanderierDrmota2015}.

\subsection*{Plan of the proofs}

The proofs of Theorems~\ref{Th1} and \ref{Th2} follow the main lines that 
are presented in Section~\ref{subsec1.1} (when we dealt with a very simple example). 
However, one has to add some more ideas to make the method work in general:
\begin{enumerate}
\item In a first step the solution functions $F_{i;k}(x) = [u^k] F_i(x,u)$ of
the system (\ref{eqcatalyticsystem}) are interpreted as generating functions
of a weighted coloured lattice path problem with paths that stay above the $x$-axis.
In the case of a system, one has to distinguish between types of paths and 
types (colours) of steps (according to the number of equations).
\item In a second step, one solves the weighted lattice path problem in a direct
way (similarly to the example of Section~\ref{subsec1.1}) by introducing 
so-called \textit{prime walks} (see \cite{BanderierLacknerWallner2020}). 
This leads to an infinite positive non-linear polynomial system of equations (without a catalytic variable).
It is technical, but one can then show that there is a proper finite subsystem from which
all other solution functions can be determined (in the above example, this was the context-free grammar associated to the system~\eqref{eqF0}). 
This proves Theorem~\ref{Th1}.
\item In order to prove Theorem~\ref{Th2} one needs a proper strongly-connected subsystem.
Actually, this is not immediate (the generic case is less obvious than what the system~\eqref{eqF0} may suggest).
The point is that the step sets that can be applied at some level $i$ vary with $i$, however, 
in a monotonely increasing way and they get constant from a certain level on. 
This property is reflected in a monotonicity property of the corresponding generating function.
One can then introduce a collection of new (positive) generating functions as differences of 
these monotone generating functions. And it turns out that, with the help of these
differences, one obtains a positive polynomial subsystem that is strongly connected.
Thus, all appearing generating functions have a dominating square-root singularity 
which proves Theorem~\ref{Th2}.
\end{enumerate}
We start with the analysis of one equation in Section~\ref{sec:oneequation}.
The general case is then discussed in Section~\ref{sec:several}.

Before we start, we want to mention that the enumeration of generalized Dyck paths -- and this is precisely what we will do -- has a quite long history. We just mention the papers \cite{LabelleYeh90,Labelle2,Duchon00}.

\section{One catalytic equation}\label{sec:oneequation}

We first discuss the case of one catalytic equation
\begin{equation}\label{eqoneequation}
    F(x,u) = P(x,u) + x \sum_{\ell = 0}^L Q_{\ell}(x,u) \Delta^\ell F(x,u).
\end{equation}
We first focus on the case $P(x,u) = 1$ and will show later  
how the general case can be reduced to this case (see Section~\ref{sec:reduction}).

\subsection{Weighted lattice paths}

In a first step, we show that the solution of a catalytic equation that is directly related to weighted lattice paths problem. We consider the catalytic equation of the form
\begin{equation}\label{eqoneequationsimple}
    F(x,u) = 1 + x  \sum_{\ell = 0}^L  \sum_{j=0}^{J} Q_{\ell,j}(x) u^j \Delta^\ell F(x,u),
\end{equation}
where the polynomials $Q_{\ell,j}(x)$ have nonnegative coefficients.
If we set
\[
F(x,u) = \sum_{k\ge 0} F_{k,0}(x) u^k
\]
the catalytic equation~\eqref{eqoneequationsimple} transforms 
into the following infinite system:
\begin{align}
    F_{0,0}(x) &= 1 + x \sum_{\ell = 0}^{L} Q_{\ell,j}(x) F_{\ell,0}(x), \nonumber \\
    F_{k,0}(x) &= x \sum_{\ell = 0}^L \sum_{j = 0}^k Q_{\ell,j}(x) F_{\ell+k-j,0}(x), 
    \qquad (0< k < J) \label{eqinfinitesystemsimple}\\
        F_{k,0}(x) &= x \sum_{\ell = }^L \sum_{j = 0}^J Q_{\ell,j}(x) F_{\ell+J-j,0}(x) \qquad (k > J) \nonumber 
\end{align}
We now consider lattice walks that connect the points $(0,k)$ with $(n,0)$ with $k,n\ge 0$
that stay above the $x$-axis. Every path $p$ consists of a sequence of steps
$p = s_1 s_2 \cdots s_M$ (with $M\ge 0$). Formally, the steps are just added up 
\[
(0,k) + s_1 + s_2 + \cdots + s_M = (n,0)
\]
so that the second component of $(0,k) + s_1 + s_2 + \cdots + s_j $ is nonnegative 
for all $0\le j \le M$.

For every level $k$ we define a 
multi-set $\mathcal{S}_k$ of allowed steps that start at this level:
\begin{align}
\mathcal{S}_k &= \bigcup_{\ell = 0}^L \bigcup_{j = 0}^k 
\{ (1+r,\ell-j) : 0\le r \le {\rm deg}Q_{\ell,j}, [x^r] Q_{\ell,j}(x) \ne 0   \} 
\qquad (k < J)  \label{eqSk1}\\
\mathcal{S}_k &= \bigcup_{\ell = 0}^L \bigcup_{j = 0}^J 
\{ (1+r,\ell-j) : 0\le r \le {\rm deg}Q_{\ell,j}, [x^r] Q_{\ell,j}(x) \ne 0   \} 
\qquad (k \ge J).   \label{eqSk2}
\end{align}
Note that in all cases 
\[
1+ r \ge 1 \quad\mbox{and}\quad  -k \le \ell- j \le L.
\]
Thus, every step is going to the right, and if we start at level $k \ge 0$ then
we stay above the $x$-axis if we add a step from $\mathcal{S}_k$.
Further note that the step multi-sets are monotone and stable, that is,
$\mathcal{S}_k \subseteq \mathcal{S}_{k+1}$ for all $k\ge 0$ and 
$\mathcal{S}_k = \mathcal{S}_J$ for all $k\ge J$.

We say that a path $p = s_1s_2\cdots s_M$ is a path from $(k,0)$ to $(0,n)$
is allowed if we have that property that $s_m \in \mathcal{S}_{k_m}$
(for $0\le m\le M$) if  $s_m$ starts at level $k_m$.
Let $\mathcal{P}_{n,k}$ denote the multi-set of allowed paths that connect $(0,0)$ with
$(k,n)$. 

%Without loss of generality, we can also assume that, starting from any level $k$, we can
%pass any given level $\overline k$ (and go back to  $(n,0)$ for some $n\ge 0$) 
%with an allowed path.
%We call such a situation {\it canonical}.

Furthermore, we define weights
\begin{align*}
w(1+r,\ell-j) &= [x^r] Q_{\ell,j}(x), \qquad (0\le \ell \le L,\  0\le j\le J)
\end{align*}
for all possible steps. Note again that the steps are in a multi-set. 
Thus, if a step $(1+r,\ell-j)$ appears several times (according to the non-zero polynomial coefficients
$[x^r] Q_{\ell,j}(x)$ with the same $r$ and the same difference $\ell-j$) then we have to assign the
corresponding weights $[x^r] Q_{\ell,j}(x)$ accordingly.

Finally, if $p = s_1s_2\cdots s_M \in \mathcal{P}_{n,k}$ is an allowed path from $(k,0)$ to $(0,n)$
then the weight of $p$ is defined by 
\[
w(p) = \prod_{m=1}^M w(s_m).
\]
Note again that the same path $p$ might appear several times in $\mathcal{P}_{n,k}$ with
potentially different weights.

The relation between the catalytic equation (\ref{eqoneequationsimple}) and the lattice paths that
have been described here is the following one.

\begin{lemma}\label{Le2.1}
    Suppose that $Q_{\ell,j}(x)$, $0\le \ell \le L$, $0\le j\le J$, are given polynomials
    with nonnegative coefficients. With the help of these polynomials define so-called
    allowed multi-sets of lattice paths $\mathcal{P}_{n,k}$ as described above. 
    Furthermore, set 
    \[
     F_{k,0}(x) = \sum_{n\ge 0} \sum_{p\in \mathcal{P}_{n,k}} w(p)\, x^n
     \]
     and
     \[
     F(x,u) =  \sum_{k\ge 0 } F_{k,0}(x) u^k.
     \]
     Then $F(x,u)$ satisfies then equation (\ref{eqoneequationsimple}) and is also
     the unique power series solution of this equation.
\end{lemma}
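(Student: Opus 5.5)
The proof splits into two parts: first a first-step decomposition of the paths, which shows that the coefficients of $F(x,u)$ satisfy the infinite system \eqref{eqinfinitesystemsimple}; then a uniqueness argument by induction on the power of $x$.

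\textbf{Step 1: decomposition by the first step.} We fix a starting level $k\ge 0$ and decompose every path in $\mathcal{P}_{n,k}$ according to its first step. Paths with $M=0$ exist only when $k=0$ and $n=0$. They contribute the constant term $1$ to $F_{0,0}(x)$ and nothing to $F_{k,0}(x)$ for $k>0$.

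Otherwise, the first step is a copy of some step $(1+r,\ell-j)\in\mathcal{S}_k$ with weight $[x^r]Q_{\ell,j}(x)$. Here $j\le \min(k,J)$, since this is exactly how $\mathcal{S}_k$ is defined in \eqref{eqSk1}--\eqref{eqSk2}. After this step the path is at level $k+\ell-j\ge 0$. The remainder is an allowed path from that level with horizontal length $n-1-r$.

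Because the multiset structure records each pair $(\ell,j,r)$ separately, this decomposition is a weight-preserving bijection between the following two sets:
\begin{itemize}
\item paths in $\mathcal{P}_{n,k}$ with $M\ge 1$;
\item triples consisting of an index $(\ell,j,r)$ with $[x^r]Q_{\ell,j}(x)\neq 0$ and $j\le \min(k,J)$, together with a path in $\mathcal{P}_{n-1-r,\,k+\ell-j}$.
\end{itemize}
Summing $w(p)x^n$ over both sides gives
\[
F_{k,0}(x)=[k=0]+x\sum_{\ell=0}^L\sum_{j=0}^{\min(k,J)}Q_{\ell,j}(x)\,F_{k+\ell-j,0}(x),
\]
which is the infinite system \eqref{eqinfinitesystemsimple}.

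\textbf{Step 2: back to the catalytic equation.} We multiply the identity above by $u^k$ and sum over $k$. We use
\[
u^j\Delta^\ell F=\sum_{m\ge 0}F_{m+\ell,0}(x)\,u^{m+j}.
\]
Its coefficient of $u^k$ is $F_{k-j+\ell,0}(x)$ for $j\le k$ and $0$ for $j>k$. The constraint $j\le\min(k,J)$ is therefore automatic, since $j\le J$ anyway. Summing yields exactly \eqref{eqoneequationsimple}.

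\textbf{Well-definedness.} We also need each $F_{k,0}(x)$ to be a formal power series, which requires finitely many paths with a given $n$. Every step has horizontal length at least $1$ and vertical increment at most $L$, so a path of length $n$ has at most $n$ steps. The level sequence is then constrained: a path ending at level $0$ after at most $n$ steps, each decreasing the level by at most $J$, must start at level $k\le nJ$. With $k$ and $n$ fixed, there are finitely many step sequences, so each coefficient of $x^n$ in $F_{k,0}(x)$ is a finite sum. Moreover, $F(x,u)$ is a power series in $x$ whose coefficients are polynomials in $u$, so $\sum_k F_{k,0}(x)u^k$ is a well-defined formal series.

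\textbf{Step 3: uniqueness.} Suppose $G(x,u)=\sum_k G_k(x)u^k$ is another power series solution. Write $G_k(x)=\sum_n g_{n,k}x^n$. Extracting $[u^k]$ shows that the $G_k$ satisfy the same infinite system. Extracting $[x^n]$ then gives
\[
g_{n,k}=[n=0][k=0]+\sum_{\ell,j,r}[x^r]Q_{\ell,j}(x)\;g_{n-1-r,\,k+\ell-j}.
\]
The right-hand side involves only coefficients with strictly smaller $n$. Induction on $n$ therefore determines all $g_{n,k}$ uniquely, simultaneously for all $k$, and they coincide with the path counts. The only point to check is that each such recurrence involves finitely many terms, which holds because the $Q_{\ell,j}(x)$ are polynomials.

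\textbf{Main obstacle.} The substantive points are the bookkeeping of the multiset of steps and the matching of the truncation $j\le k$ (for $k<J$) with the $\Delta^\ell$ shifts. The rest is routine. I read the upper summation limits in \eqref{eqinfinitesystemsimple} as $\min(k,J)$, which corrects the evident typos in the displayed system.
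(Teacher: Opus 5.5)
Your proposal is correct and follows the paper's own argument. You split each path after its first step, observe that the weighted multiset $\mathcal{S}_k$ is encoded by the polynomials $xQ_{\ell,j}(x)$ with $0\le j\le \min(k,J)$, and translate the resulting infinite system back into \eqref{eqoneequationsimple} via the $\Delta^\ell$ shifts; your finiteness check and the induction on the power of $x$ for uniqueness spell out what the paper only asserts, and reading the summation limits as $\min(k,J)$ correctly repairs the typos in the displayed system.
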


\begin{proof}
This is a standard combinatorial procedure. 
We just have to split up the paths from $(0,k)$ to $(n,0)$ 
after the first step that has to be contained in $\mathcal{S}_k$.
Furthermore note that the polynomials 
$xQ_{\ell,j}(x)$, $0\le \ell \le L$, $0\le j \le \min(k,J)$, encode the
multi-stepset $\mathcal{S}_k$ with corresponding weights.
Thus, the system (\ref{eqinfinitesystemsimple}) is satisfied,
which is equivalent to  (\ref{eqoneequationsimple}).
\end{proof}

\subsection{Prime walks}

We again start with polynomials $Q_{\ell,j}(x)$, $0\le \ell \le L$, $0\le j\le J$,
and define multi-sets of steps $\mathcal{S}_k$ 
as in (\ref{eqSk1}) and  (\ref{eqSk2}). The weights $w(s)$ of a step $s$
is defined in the same way, too.
However, instead of considering just allowed lattice paths
from $(k,0)$ to $(n,0)$ we consider now (allowed) lattice paths $p = s_1s_2\cdots s_M$
that connect $(k_1,0)$ to $(n,k_2)$ for $k_1,k_2,n \ge 0$ (with the property that
$s_m \in \mathcal{S}_{k_m}$ if the step $s_m$ starts at level $k_m$, $1\le m \le M$);
the weight $w(p) = w(s_1)w(s_2) \cdots w(s_M)$ is defined accordingly. 
The multi-set of (allowed) paths from $(k_1,0)$ to $(n,k_2)$ is now denoted
by $\mathcal{P}_{n,k_1,k_2}$. 
Furthermore, we denote by $\mathcal{P}_{n,k_1,k_2}^{[\ge k]}$
the sub-multi-set of paths $p \in \mathcal{P}_{n,k_1,k_2}$ 
that contains just paths that stay $\ge k$ and set
\[
F_{k_1,k_2}^{[\ge k]}(x) = \sum_{n\ge 0} \sum_{p \in  \mathcal{P}_{n,k_1,k_2}^{[\ge k]}}  
w(p) \, x^n.
\]
Clearly, we have
\[
F_{k,0}(x) = F_{k,0}^{[\ge 0]}(x).
\]
Further we note that the step multi-sets satisfy $\mathcal{S}_k = \mathcal{S}_J$ for $k\ge J$.
This implies that
\begin{equation}
    F_{k_1,k_2}^{[\ge k]}(x)  = F_{k_1-k+J,k_2-k+J}^{[\ge J]}(x) 
    \label{eqFk1k2k-relation}
\end{equation}
for $k \ge J$.

We now do the lattice path counting problem, that is, finding proper relations
between the functions $F_{k_1,k_2}^{[\ge k]}(x)$, in a different way.
For this purpose, we introduce the so-called {\it prime walks} (see \cite{BanderierLacknerWallner2020}).

We fix some $k\ge 0$ and $i\ge 0$ and consider the multi-set of 
all allowed paths $\mathcal{A}_{n,k,i}$ that
start at $(0,k)$ and end at $(n,k+i)$ for some $n> 0$ and have
the property that they stay strictly above the level~$k+i$ in between, in particular only the last step ends at level~$k+i$.
Furthermore, let 
\[
A_{k,i}(x) = \sum_{n> 0}  \sum_{p \in \mathcal{A}_{n,k,i}}  w(p) \, x^n
\]
denote the corresponding generating functions. For example, $A_{0,0}(x)$ 
corresponds to the excursions from $0$ to $0$.

In the same way, we define for $k\ge 0$, $0\le i \le k$, $n> 0$ the {\it reverse prime walks} 
$\overline{\mathcal{A}}_{n,k,i}$
that start at $(0,k)$ and end at $(n,k-1)$ and have 
the property that they stay strictly above the level~$k$ in between.
The corresponding generating function is given by
\[
\overline A_{k,i}(x) = \sum_{n> 0}  \sum_{p \in \overline{\mathcal{A}}_{n,k,i}}  w(p) \, x^n.
\]
Note that $A_{k,0}(x) = \overline A_{k,0}(x)$. Furthermore, since the step sets satisfy
$\mathcal{S}_k = \mathcal{S}_J$ for $k\ge J$ it follows that
\begin{equation}\label{eqAkstable}
 A_{k,i}(x) = A_{J,i}(x) \quad\mbox{and}\quad \overline A_{k,i}(x) = \overline A_{J,i}(x)
\quad \mbox{for $k\ge J$.}
\end{equation}

We now set
\begin{align*}
    \overline d_k &= \max\{ \ell-j : 0\le \ell \le L,\ 0\le j\le k,\ Q_{\ell,j}(x) \ne 0 \},
    \qquad (k \le J),
\end{align*}
and
\begin{align*}
    \underline d_k &= \max\{ j-\ell : 0\le \ell \le L,\ 0\le j\le k,\ Q_{\ell,j}(x) \ne 0 \}
    \qquad (k \le J).
\end{align*}
Then we certainly have
\begin{align}
    A_{k,i}(x) &=0 \qquad \mbox{for $i> \overline d_k$ and $0\le k < J$},  \nonumber \\
    A_{k,i}(x) &=0 \qquad \mbox{for $i> \overline d_J$ and $k \ge J$}, \label{eqAkzero}\\
    \overline A_{k,i}(x) &=0 \qquad \mbox{for $i> \underline d_k$ and $0\le k < J$}, \nonumber\\
    \overline A_{k,i}(x) &=0 \qquad \mbox{for $i> \underline d_J$ and $k \ge J$}.\nonumber
\end{align}
Furthermore, we have
\begin{align}
    A_{k,\overline d_k}(x) &= x \sum_{\ell-j = \overline d_k} Q_{\ell,j}(x) \qquad (0\le k < J),
     \nonumber \\
    A_{k,\overline d_J}(x) &= x \sum_{\ell-j = \overline d_J} Q_{\ell,j}(x) \qquad ( k \ge J), 
    \label{eqAkknown}\\
    \overline A_{k,\underline d_k}(x) &= x \sum_{\ell-j = -\underline d_k} Q_{\ell,j}(x) 
    \qquad (0\le k < J),  \nonumber \\
    \overline A_{k,\underline d_J}(x) &= x \sum_{\ell-j = -\underline d_J} Q_{\ell,j}(x) 
    \qquad (k\ge J).  \nonumber
\end{align}
Thus, the only unknown prime walk generating functions are
\begin{equation}\label{eqAkunknown}
      A_{k,i}(x), \qquad 0\le k \le J, \ 0\le i < \overline d_k, \qquad
    \overline A_{k,i}(x), \qquad 0\le k \le J, \ 0\le i < \underline d_k
\end{equation}

The next lemma provides a general relation between the 
generating functions $F_{k_1,k_2}^{[\ge k]}(x)$ and the prime walk generating
functions $A_{k,i}(x)$ and $\overline A_{k,i}(x)$.

\begin{lemma}\label{Le2.2}
We have the following relations between the generating functions
$F_{k_1,k_2}^{[\ge k]}(x)$, $A_{k,i}(x)$ and $\overline A_{k,i}(x)$:
\begin{align*}
    F_{k_1,k_1}^{[\ge k]}(x) 
    &= 1 + \sum_{m=k}^{k_1} \overline A_{k_1,k_1-m}(x) F_{m,k_1}^{[\ge k]}(x) 
    \qquad (k\le k_1),\\
    F_{k_1,k_2}^{[\ge k]}(x) 
    &= \sum_{m=k}^{k_1} \overline A_{k_1,k_1-m}(x) F_{m,k_2}^{[\ge k]}(x)
    + \sum_{m=k_1+1}^{k_2}  A_{k_1,m-k_1}(x) F_{m,k_2}^{[\ge m]}(x)
    \qquad (k\le k_1 < k_2), \\
    F_{k_1,k_2}^{[\ge k]}(x) 
    &= \sum_{m=k}^{k_1} \overline A_{k_1,k_1-m}(x) F_{m,k_2}^{[\ge k]}(x)
        \qquad (k\le k_2 < k_1),
\end{align*}
and
\begin{align*}
    A_{k,i}(x) &= \sum_{0\le \ell \le L, 0\le j\le k\, :\, \ell-j = i} x Q_{\ell,j}(x) \\
    & + \sum_{i_1 > i,\, i_2 > i} \ 
    \sum_{0\le \ell_1 \le L, 0\le j_1\le k\, :\, \ell_1-j_1 = i_1} x Q_{\ell_1,j_1}(x) \  
    F_{k + i_1,k + i_2}^{[\ge k+i+1]}(x) \\
    & \qquad \qquad  \qquad \qquad  \sum_{0\le \ell_2 \le L, 0\le j_2\le \min(k + i_2,J)\, :\, \ell_2-j_2 = -(i_2-i)} 
    x Q_{\ell_2,j_2}(x) \\ 
    &\qquad \qquad \qquad (0\le k < J, \ 0\le i \le \overline d_k), \\
     A_{k,i}(x) &= \sum_{0\le \ell \le L, 0\le j\le J\, :\, \ell-j = i} x Q_{\ell,j}(x) \\
    & + \sum_{i_1 > i,\, i_2 > i} \ 
    \sum_{0\le \ell_1 \le L, 0\le j_1\le J\, :\, \ell_1-j_1 = i_1} x Q_{\ell_1,j_1}(x) \  
    F_{k + i_1,k + i_2}^{[\ge k+i+1]}(x)  \\
    \\ & \qquad \qquad \qquad \qquad
    \sum_{0\le \ell_2 \le L, 0\le j_2\le J\, :\, \ell_2-j_2 = -(i_2-i)} x Q_{\ell_2,j_2}(x), \\
    \\ & \qquad \qquad \qquad (k\ge J, \ 0\le i \le \overline d_J),
\end{align*}
as well as 
\begin{align*}
    \overline A_{k,i}(x) &= \sum_{0\le \ell \le L, 0\le j\le k\, :\, \ell-j = -i} x Q_{\ell,j}(x) \\
    & + \sum_{i_1 > 0,\, i_2 > 0} \ 
    \sum_{0\le \ell_1 \le L, 0\le j_1\le k\, :\, \ell_1-j_1 = i_1} x Q_{\ell_1,j_1}(x) \ 
    F_{k + i_1,k + i_2}^{[\ge k+1]}(x) \\
    & \qquad \qquad \qquad \qquad 
    \sum_{0\le \ell_2 \le L, 0\le j_2\le \min( k + i_2,J) \, :\, \ell_2-j_2 = -(i + i_2)} 
    x Q_{\ell_2,j_2}(x), 
    \\ & \qquad \qquad \qquad (0\le k < J, \ 0\le i \le \underline d_k), \\
     \overline A_{k,i}(x) &= \sum_{0\le \ell \le L, 0\le j\le J\, :\, \ell-j = -i} x Q_{\ell,j}(x) \\
    & + \sum_{i_1 > 0,\, i_2 > 0} \ 
    \sum_{0\le \ell_1 \le L, 0\le j_1\le J\, :\, \ell_1-j_1 = i_1} x Q_{\ell_1,j_1}(x) \ 
    F_{k + i_1,k + i_2}^{[\ge k+1]}(x)  \\
    & \qquad \qquad \qquad \qquad 
    \sum_{0\le \ell_2 \le L, 0\le j_2\le J\, :\, \ell_2-j_2 = -(i + i_2)} x Q_{\ell_2,j_2}(x), \\
    \\ & \qquad \qquad \qquad (k\ge J, \ 0\le i \le \underline d_J).
\end{align*}
\end{lemma}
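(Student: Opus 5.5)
The relations are all combinatorial decompositions, and I will prove each one by a bijective splitting of weighted paths. Weights are multiplicative, so each splitting turns into a product of generating functions, with $x$ marking horizontal length. I will work throughout with multi-sets, so a step is remembered together with the polynomial $Q_{\ell,j}$ and the monomial $x^r$ it came from. That makes every decomposition an honest bijection on multi-sets. I also use that the allowed step set depends only on the starting level, through $\mathcal{S}_{\min(k,J)}$. So every factor that appears can be described purely by levels, which is what gives the distinction between $k<J$ and $k\ge J$ (via $\min(k,J)$ and \eqref{eqAkstable}).

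\textbf{The relations for $F^{[\ge k]}_{k_1,k_2}$.} I decompose according to the \emph{first} return structure. Take a path $p$ from level $k_1$ to level $k_2$ staying $\ge k$.

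First suppose $p$ goes below $k_1$ at some time. Consider the first time $p$ reaches a level $m<k_1$ after having stayed at or above $k_1$; more precisely, take the first step whose endpoint is strictly below $k_1$. The initial segment up to that point is a path from level $k_1$ that stays strictly above the level it ends at, up to its last step. Splitting there, with the reverse prime walk interpreted as "start at $k_1$, stay above the final level $m$ in between", gives a factor $\overline A_{k_1,k_1-m}$. The rest of the path lies in $\mathcal P^{[\ge k]}_{\cdot,m,k_2}$. Here I would take care to match the definition of $\overline{\mathcal A}_{n,k,i}$, reading its endpoint as level $k-i$; the printed "$k-1$" is presumably a typo.

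Otherwise $p$ never goes below $k_1$.
\begin{itemize}
\item If $k_2\le k_1$, this forces $k_2=k_1$: either the path is empty, giving the term $1$, or it returns to $k_1$. The case of an excursion staying $\ge k_1$ is covered by the $m=k_1$ term, using $\overline A_{k_1,0}=A_{k_1,0}$.
\item If $k_2>k_1$, I cut at the \emph{last} visit to level $k_1$ instead. The piece after it starts with a step to some level $m>k_1$, then stays $>k_1$. I need to refine this to match the displayed form $A_{k_1,m-k_1}F^{[\ge m]}_{m,k_2}$. For that I cut at the last time the path is at its minimum level $m$ over the remaining portion, a "last passage" decomposition. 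The piece from the start to this point stays above $m$ strictly in between, and is precisely an $A_{k_1,m-k_1}$ prime walk, suitably read. The remainder stays $\ge m$.
\end{itemize}

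Reconciling the first-passage and last-passage choices so that each path is counted exactly once is where I expect the bookkeeping to be delicate. I would fix one canonical cut point, namely the first time the running minimum of the suffix changes, and verify injectivity and surjectivity.

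\textbf{The relations for $A_{k,i}$ and $\overline A_{k,i}$.} A prime walk from $k$ to $k+i$ staying strictly above $k+i$ in between is either a single step, which gives the first sum, or has length $\ge 2$. In the second case its first step goes to $k+i_1$ with $i_1>i$, weighted by $xQ_{\ell_1,j_1}$ with $\ell_1-j_1=i_1$ and $j_1\le \min(k,J)$. Its last step starts at some $k+i_2$ with $i_2>i$ and descends by $i_2-i$, with $j_2\le\min(k+i_2,J)$. The middle part is an arbitrary allowed path from $k+i_1$ to $k+i_2$ staying $\ge k+i+1$, which gives $F^{[\ge k+i+1]}_{k+i_1,k+i_2}$. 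The reverse case is identical, with constraint level $k+1$ and a last step of size $-(i+i_2)$.

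For $k\ge J$ the step sets are all $\mathcal S_J$, so the index ranges collapse to $j\le J$. Finally, I would check that the single-step and first/last-step factorizations are bijective. This is routine, since first and last steps are uniquely determined.
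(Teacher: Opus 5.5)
Your argument for the recursions of $A_{k,i}$ and $\overline A_{k,i}$ matches the paper's: a single step, or a first step, a middle path staying $\ge k+i+1$ resp.\ $\ge k+1$, and a last step. You are also right that the endpoint of a reverse prime walk should be $k-i$. The gap is in the three relations for $F^{[\ge k]}_{k_1,k_2}$: your cut points do not produce the terms that appear there.

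Cutting at the first step that ends strictly below $k_1$ leaves a prefix that stays $\ge k_1$, possibly returning to $k_1$ several times, before dropping to $m<k_1$. These prefixes are counted by $F^{[\ge k_1]}_{k_1,k_1}\,\overline A_{k_1,k_1-m}$, not by $\overline A_{k_1,k_1-m}$. Your remedy, reading reverse prime walks as staying above the \emph{final} level $m$, does not work for three reasons:
\begin{itemize}
\item it contradicts the definition, which requires staying strictly above the starting level;
\item it contradicts your own recursion for $\overline A_{k,i}$, which uses the constraint $\ge k+1$;
\item it admits walks dipping into levels strictly between $m$ and $k_1$, which your cut never produces.
\end{itemize}
The cases also overlap. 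A path that starts with an excursion strictly above $k_1$, returns to $k_1$, and later goes below $k_1$ is counted in your ``goes below $k_1$'' case. It is counted again in the $m=k_1$ term $\overline A_{k_1,0}F^{[\ge k]}_{k_1,k_2}$, because that second factor allows levels below $k_1$. In the case $k_1<k_2$, cutting at the last visit to $k_1$ leaves a prefix with no counterpart in the second sum. Cutting at the \emph{last} visit to the minimum level $m$ gives a prefix that may touch $m$ and a suffix strictly above $m$. So you obtain neither $A_{k_1,m-k_1}$ nor $F^{[\ge m]}_{m,k_2}$.

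The missing idea is a single canonical cut, as in the paper: cut at the first point after the start whose level is $\le k_1$, with $k_1$ itself included. If that point is at level $m\in\{k,\dots,k_1\}$, the prefix stays strictly above $k_1$ in between. So it is exactly a walk counted by $\overline A_{k_1,k_1-m}$, and the suffix is an arbitrary path counted by $F^{[\ge k]}_{m,k_2}$. This treats $m=k_1$ and $m<k_1$ uniformly. The empty path gives the constant $1$ when $k_1=k_2$, and such a point always exists when $k_2\le k_1$ and the path is nonempty.

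If there is no such point, which can only happen when $k_1<k_2$, let $m>k_1$ be the minimum level after the start and cut at its \emph{first} occurrence. The prefix stays strictly above $m$ in between, giving $A_{k_1,m-k_1}$. The suffix stays $\ge m$, giving $F^{[\ge m]}_{m,k_2}$. With these cuts, bijectivity is immediate. The ``delicate bookkeeping'' you postpone is precisely the content of this half of the lemma, and with your current cut points it does not go through.
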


\begin{proof}
The proof of the relations for $F_{k_1,k_2}^{[\ge k]}(x)$ is (again) standard 
(see \cite{BanderierLacknerWallner2020}).
In order to get the relation for $F_{k_1,k_1}^{[\ge k]}(x)$, we split at the first point 
different from the starting point $(0,k_1)$ that reaches a level $\le k_1$.
The same splitting applies for the (third) case $k_1 > k_2$. If $k_1 > k_2$ then there is 
either a point (different from the starting point) that reaches a level $\le k_1$ -- this 
provides the fist term -- or the path stays at levels $> k_1$. In the latter case
let $m \in \{k_1+1, \ldots, k_2\}$ be the minimum level that occurs after the first step.
Here we split at the first occurrence of the level $m$. This gives the second part.

The equation for $A_{k,i}(x)$ can be seen in the following way. We either reach
the endpoint $(n,k+i)$ in one step -- this provides the first term -- or the first
step reaches a level $k+i_1$ for some $i_1 > 0$, then we stay at a level $\ge k+i+1$ until the last step that goes from level $k+i_2 > k+i$ to the final level $k+i$. This 
provides the second term. The argument for $\overline A_{k,i}(x)$ is very similar.
\end{proof}

We note that there are other ways to describe prime walks. 
For example, we can also use the prime-walk decomposition to get a recursive description.

Secondly, it is also possible to split the lattice paths in a way that the prime walk 
is put to the very end. For example, in this way we get the relation
\begin{equation}\label{eqFkrecuresionvariant}
    F_{k_1,k_2}^{[\ge k]}(x) 
    = \sum_{m=k}^{k_2}  F_{k_1,m}^{[\ge k]}(x)  A_{m,k_2-m}(x) 
        \qquad (k\le k_1 < k_2).
\end{equation}

\subsection{A finite grammar}

The main difference between the infinite linear system~\eqref{eqinfinitesystemsimple} and the non-linear system  (from Lemma~\ref{Le2.2}) is that we can find a finite subsystem that contains all the information.

\begin{lemma}\label{Lefinitesystem1}
    The finite polynomial system of equations for the $F_{k_1,k_2}^{[\ge k]}(x)$ (with $0\le k_1 < L+J$, $0\le k_2 < 2J$, $0\le k \le \min(J,k_1,k_2)$)
    $A_{k,i}(x)$ (with $0\le k \le J$, $0\le i < \overline d_k$) and $\overline A_{k,i}(x)$
    (with $0\le k \le J$, $0\le i < \underline d_k$) 
    from Lemma~\ref{Le2.2} is a positive polynomial system 
    of equations that uniquely determines all these functions. Furthermore, all other
    functions $F_{k_1,k_2}^{[\ge k]}(x)$ can be expressed in a polynomial way with
    nonnegative coefficients from them. Thus, all functions $F_{k_1,k_2}^{[\ge k]}(x)$
    correspond to a finite grammar.
\end{lemma}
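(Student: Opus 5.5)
The plan has three parts. First, check that the equations of Lemma~\ref{Le2.2}, restricted to the indicated finite index set, close up. Second, show the system is well-founded, which gives uniqueness. Third, express every remaining $F_{k_1,k_2}^{[\ge k]}$ as a polynomial in the finite family.

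\emph{Step 1: reduction by translation invariance.} By \eqref{eqFk1k2k-relation}, every $F_{k_1,k_2}^{[\ge k]}$ with $k\ge J$ equals $F_{k_1-k+J,k_2-k+J}^{[\ge J]}$. So we may always assume $k\le J$. The same shift, applied through \eqref{eqAkstable}, lets us replace every $A_{k,i}$ and $\overline A_{k,i}$ with $k>J$ by $A_{J,i}$ and $\overline A_{J,i}$. The known prime walks in \eqref{eqAkknown} are explicit polynomials, and the vanishing ones in \eqref{eqAkzero} drop out.

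\emph{Step 2: closure of the recursion.} Take a function on the right-hand side of an equation of Lemma~\ref{Le2.2}; after Step~1 we check that its indices are again in the finite range.
\begin{itemize}
\item In the equations for $A_{k,i}$ and $\overline A_{k,i}$, the $F$-terms are $F_{k+i_1,k+i_2}^{[\ge k+i+1]}$. Here $i_1\le L$ because a single step raises the level by at most $L$. Also $i_2-i\le J$, equivalently $i_2\le i+J$, because a single step descends by at most $J$.
\item The $F$-equations refer to $F_{m,k_2}^{[\ge k]}$ with $m\le k_1$, and to $F_{m,k_2}^{[\ge m]}$ with $k_1<m\le k_2$.
\end{itemize}
The excursion-type terms with $k_2$ large are handled by the variant \eqref{eqFkrecuresionvariant}. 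If $k_2-k\ge J$, we shift down so that the lower barrier is at $J$, and then peel off prime walks from the end. In this way only starting levels below $L+J$ and ending levels below $2J$, relative to a barrier at most $J$, need to be kept as unknowns. Any other $F$ appearing is rewritten as a sum of products of these, using \eqref{eqFkrecuresionvariant} and the decomposition at the last visit of a level, both with nonnegative coefficients.

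The main obstacle is this index bookkeeping. One has to verify that the chosen window, $k_1<L+J$, $k_2<2J$, $k\le\min(J,k_1,k_2)$, is stable under the rewritings. Where it is not literally stable, one iterates the peeling decomposition a bounded number of times until all indices fall inside the window. I would first organise this by the relative coordinates $k_1-k$ and $k_2-k$ and treat the case $k=J$ separately.

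\emph{Step 3: positivity and uniqueness.} All equations of Lemma~\ref{Le2.2} have polynomial right-hand sides with nonnegative coefficients, so the finite system is positive. For uniqueness, each occurrence of an unknown $A$ or $\overline A$ is multiplied by a factor $x$ coming from the steps. This holds because every step has horizontal length $1+r\ge1$, and every prime walk consists of at least one step. Similarly, each $F$ on the right is either multiplied by some $A$ or $\overline A$ (and hence by $x$), or it appears in a product with the explicit factors $xQ$. Hence the system has the form $\mathbf y=\Phi(x,\mathbf y)$, where the Jacobian of $\Phi$ with respect to $\mathbf y$ is nilpotent modulo $x$, after one or two iterations if necessary. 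A contraction in the $x$-adic topology then shows there is a unique formal power series solution.

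The combinatorially defined generating functions satisfy the system, so they are this solution. All remaining $F_{k_1,k_2}^{[\ge k]}$ are nonnegative polynomial expressions in the finite family, by Step~2 together with \eqref{eqFk1k2k-relation}, so they correspond to a finite grammar.
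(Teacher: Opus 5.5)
Your plan follows the same outline as the paper's proof. You reduce the prime walks via \eqref{eqAkstable} and \eqref{eqAkzero}, check that the right-hand sides of Lemma~\ref{Le2.2} stay inside the window using $i_1\le L$, $i_2\le i+J$ and the shift \eqref{eqFk1k2k-relation}, and then recover all other $F$'s. The gap is that the step you call ``the main obstacle'' is the whole first half of the lemma, and you do not carry it out. The fallback ``iterate the peeling a bounded number of times'' is unjustified. It is also unnecessary, because the window is literally closed under the equations of Lemma~\ref{Le2.2}:
\begin{itemize}
\item In the equation for $A_{k,i}$ ($k\le J$), the term $F^{[\ge k+i+1]}_{k+i_1,k+i_2}$ has $i<i_1\le L$ and $i<i_2\le i+J$. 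If $k+i+1\le J$, then $k+i_1\le J-1-i+L<L+J$ and $k+i_2\le k+i+J\le 2J-1$. If $k+i+1>J$, the shift turns it into $F^{[\ge J]}_{i_1-i+J-1,\,i_2-i+J-1}$, whose indices are $<L+J$ and $<2J$.
\item The same two cases (barrier $k+1$, $0<i_2\le J-i$) settle $\overline A_{k,i}$.
\item In the $F$-equations, $F^{[\ge k]}_{m,k_2}$ with $k\le m\le k_1$ is trivially in range. The term $F^{[\ge m]}_{m,k_2}$ with $m>J$ equals $F^{[\ge J]}_{J,\,k_2-m+J}$, and $k_2-m+J<k_2<2J$.
\end{itemize}
The variant \eqref{eqFkrecuresionvariant} plays no role here. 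Also, your instruction ``shift down so that the barrier is at $J$'' is unavailable when $k<J$, since \eqref{eqFk1k2k-relation} only applies to barriers $k\ge J$.

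For the second half, ``rewritten as a sum of products'' skips the real difficulty. Both \eqref{eqFkrecuresionvariant} and the first and third equations of Lemma~\ref{Le2.2} contain the left-hand side again on the right. This is the term $m=k_2$ with factor $A_{k_2,0}$, resp.\ $m=k_1$ with $\overline A_{k_1,0}=A_{k_1,0}$. So iterating them produces the series $\sum_r A_{k_2,0}^r$, not a polynomial. The missing observation is that $1/(1-A_{j,0})=F^{[\ge j]}_{j,j}$ is itself a window function (equal to $F^{[\ge J]}_{J,J}$ for $j>J$). Solving for the diagonal term gives, for $k_1<k_2$,
\[
F^{[\ge k]}_{k_1,k_2}=F^{[\ge k_2]}_{k_2,k_2}\sum_{m=k}^{k_2-1}F^{[\ge k]}_{k_1,m}A_{m,k_2-m},
\]
and analogous formulas with prefactor $F^{[\ge k_1]}_{k_1,k_1}$ for $k_1\ge k_2$. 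Every $F$ on the right now has a smaller index sum. So induction on $k_1+k_2$ for fixed $k\le J$, starting from $F^{[\ge k]}_{k,k}$, yields the positive polynomial expressions. Combinatorially, this is your ``last visit'' decomposition: cut at the last visit to a level $<k_2$, then at the first visit to $k_2$.

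Your uniqueness argument in Step~3 is a welcome addition, since the paper does not prove uniqueness, but one statement needs correcting. The unknowns $A$, $\overline A$ enter the $F$-equations without an explicit factor $x$. What actually works is that their own equations are divisible by $x$. This forces $A(0)=\overline A(0)=0$, and then the Jacobian at $x=0$ is nilpotent (strictly block-triangular).
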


\begin{proof}
By the above discussion, we know that the only unknown prime walk generating functions
are  $A_{k,i}(x)$  for $0\le k \le J$, $0\le i < \overline d_k$, and 
$\overline A_{k,i}(x)$, $0\le k \le J$, $0\le i < \underline d_k$    (see \ref{eqAkunknown}).
We also note that by  (\ref{eqAkstable}) all appearing prime walk generating functions
can be reduced to these functions; all the other ones are either zero or explicitly
known (see (\ref{eqAkzero}) and (\ref{eqAkknown})).

We study first the equation for $A_{k,i}(x)$
(from Lemma~\ref{Le2.2}), where $0\le k\le J$ and $0\le i \le \overline d_k \le L$.
On the right hand side the function $F_{k+i_1,k+i_2}^{[\ge k+i+1]}(x)$ appears, where
$i_1 \le L$ and $i_2\le i + J$. If $k+i+1 \le J$ then we certainly have
$k+i_1 < J + L$ and $k+i_2 = k+i + i-i_2 < J + J$. However, if $k+i+1 > J$ we have
\[
F_{k+i_1,k+i_2}^{[\ge k+i+1]}(x) = F_{i_1-i+J-1,i_2-i + J-1}^{[\ge J]}(x),
\]
where we (again) have $i_1-i+J-1 < L+J$ and $i_2-i + J-1< 2J$.

Similarly we can study the equation for $\overline A_{k,i}(x)$, where
$0\le k\le J$ and $0\le i \le \underline d_k \le J$.
In this case on the right hand side the function $F_{k+i_1,k+i_2}^{[\ge k+1]}(x)$ appears,
where $i_1 \le L$ and $i_2 +i \le J$.
If $k+1 \le J$ we certainly have $k+i_1 < J + L$ and $k+i_2 < 2J$. 
Secondly, if $k+1 > J$ we have
\[
F_{k+i_1,k+i_2}^{[\ge k+1]}(x) = F_{i_1+J-1,i_2+J-1}^{[\ge J]}(x),
\]
where $i_1+J-1 < L+J$ and $i_2+J-1 < 2J$. 

Summing up, we rewriting the right hand side of the equations for 
$A_{k,i}(x)$ and $\overline A_{k,i}(x)$ the only functions that appear
are of the form $F_{k_1,k_2}^{[\ge k]}$ with $0\le k_1 < L+J$, $0\le k_2 < 2J$, $0\le k \le \min(J,k_1,k_2)$.

Finally we check the equations for $F_{k_1,k_2}^{[\ge k]}(x)$ (from Lemma~\ref{Le2.2}) in this 
range. As already mentioned all appearing prime walk generating functions can be reduced
to the cases $A_{k,i}(x)$  for $0\le k \le J$, $0\le i < \overline d_k$, and 
$\overline A_{k,i}(x)$, $0\le k \le J$, $0\le i < \underline d_k$. Thus, we only have
to take care of the $F$-functions that appear on the right hand side.
In the first case (where $k_1 = k_2$) and in the third case (where $k_2 < k_1$) 
the right hand side contains only functions of the form 
$F_{m,k_2}^{[\ge k]}(x)$ with $k\le m \le k_1$. Thus, we certainly have $m < L+J$ and $k_2 < 2J$
and we are done. In the second case ($k_1 < k_2$) we have to be more careful.
There is no problem for the first sum, where $F_{m,k_2}^{[\ge k]}$ appears (with $k\le m \le k_1)$.
The second sum contains the function $F_{m,k_2}^{[\ge m]}(x)$. If $m\le J$ then we certainly have
$m < L+J$. Conversely if $m> J$ then we have
\[
F_{m,k_2}^{[\ge m]}(x) = F_{J,k_2-m+J}^{[\ge J]}(x),
\]
where $J< L+J$ and $k_2-m + J< k_2 < 2J$. 

This finishes the first part of the proof.

For the second part we first observe that the functions 
\[
\frac 1{1-A_{k,0}(x)} = \frac 1{1-\overline A_{k,0}(x)} \qquad (k\ge 0)
\]
are known, actually they can directly written in terms of the solution 
functions of the above finite system.
In any case (in particular for $k\ge J$) we have
\[
\frac 1{1-A_{k,0}(x)} =  F_{k,k}^{[\ge k]}(x),
\]
and for $k > J$ we also have
\[
\frac 1{1-A_{k,0}(x)} = \frac 1{1-A_{J,0}(x)} = F_{J,J}^{[\ge J]}(x) 
\]

Secondly we use now a slightly modified system for the 
functions $F_{k_1,k_2}^{[\ge k]}(x)$ that follows from 
Lemma~\ref{Le2.2} and (\ref{eqFkrecuresionvariant}):
\begin{align*}
      F_{k_1,k_1}^{[\ge k]}(x) 
    &= 1 + \sum_{m=k}^{k_1} \overline A_{k_1,k_1-m}(x) F_{m,k_1}^{[\ge k]}(x) 
    \qquad (k\le k_1),\\
    F_{k_1,k_2}^{[\ge k]}(x) 
    &= \sum_{m=k}^{k_2}  F_{k_1,m}^{[\ge k]}(x)  A_{m,k_2-m}(x) 
        \qquad (k\le k_1 < k_2), \\
    F_{k_1,k_2}^{[\ge k]}(x) 
    &= \sum_{m=k}^{k_1} \overline A_{k_1,k_1-m}(x) F_{m,k_2}^{[\ge k]}(x)
        \qquad (k\le k_2 < k_1).
\end{align*}
We rewrite this system to:
\begin{align*}
      F_{k_1,k_1}^{[\ge k]}(x) 
    &= \frac 1{1-\overline A_{k_1,0}(x)} \left( 1 + \sum_{m=k}^{k_1-1} \overline A_{k_1,k_1-m}(x) F_{m,k_1}^{[\ge k]}(x) \right)
    \qquad (k\le k_1),\\
    F_{k_1,k_2}^{[\ge k]}(x) 
    &= \frac 1{1-A_{k_2,0}(x)} \left( \sum_{m=k}^{k_2-1}  F_{k_1,m}^{[\ge k]}(x)  A_{m,k_2-m}(x) \right)
        \qquad (k\le k_1 < k_2), \\
    F_{k_1,k_2}^{[\ge k]}(x) 
    &= \frac 1{1-\overline A_{k_1,0}(x)} 
    \left( \sum_{m=k}^{k_1-1} \overline A_{k_1,k_1-m}(x) F_{m,k_2}^{[\ge k]}(x)
    \right)
        \qquad (k\le k_2 < k_1).
\end{align*}
Now, for every given $k\ge 0$, we proceed on induction on the sum $k_1+k_2$. 
The starting point is the case $k_1=k_2 = k$. Here we already observed that
the functions $F_{k,k}^{[\ge k]}(x)$ can be considered as known functions
(that are part of the solution of the above finite system).
The induction step is now trivial since the sum of the indices on the right hand
side is always smaller than the sum $k_1+k_2$ on the left hand side. Furthermore
all prime walk generating functions are either zero, explicit, or part of the
above finite system. This completes the proof of the lemma.
\end{proof}

\subsection{Proof of Theorem~\ref{Th1} for one equation}\label{sec:reduction}

Finally, we consider the general case~\eqref{eqoneequation}, where we represent
the polynomial $P(x,u)$ as
\[
P(x,u) = \sum_{m=0}^M P_m(x) u^m.
\]
By linearity the solution $F(x,u)$ can be represented by
\begin{equation}\label{eqFGrelation}
F(x,u) = \sum_{m=0}^m P_m(x) G_m(x,u),
\end{equation}
%and in particular by
%\[
%F_0(x) = F(x,0) = \sum_{m=0}^m P_m(x) F_m(x,0),
%\]
where $G_m(x,u)$ is the solution of the equation
\begin{equation}\label{eqoneequation-m}
    G_m(x,u) = u^m + x \sum_{\ell = 0}^L Q_{\ell}(x,u) \Delta^\ell G_m(x,u).
\end{equation}
If we write 
\[
G_m(x,u) = \sum_{k\ge 0} G_{m;k}(x) u^k
\]
then (\ref{eqoneequation-m}) rewrites to the infinite system
\begin{align}
    G_{m;0}(x) &=  \delta_{0,m} + x \sum_{\ell = 0}^{L} Q_{\ell,j}(x) G_{m;\ell}(x), \nonumber \\
    G_{m;k}(x) &=  \delta_{k,m} + x \sum_{\ell = 1}^L \sum_{j = 0}^k Q_{\ell,j}(x) G_{m;\ell+k-j}(x), 
    \qquad (0< k < J) \label{eqinfinitesystemsimple-2}\\
        G_{m;k}(x) &=  \delta_{k,m} + x \sum_{\ell = 1}^L \sum_{j = 0}^J Q_{\ell,j}(x)
        G_{m;\ell+J-j}(x) \qquad (k > J). \nonumber 
\end{align}

\begin{lemma}\label{Le2.3}
    We have the relation
    \[
    G_{m;k}(x) = F_{k,m}^{[\ge 0]}(x),
    \]
    that is, the functions $G_{m;k}(x)$ are the generation function of the allowed lattice paths 
    that start at level $k$ and end at level $m$.
\end{lemma}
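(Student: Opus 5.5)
The plan is to mimic the proof of Lemma~\ref{Le2.1}: first show that the family of path generating functions $\bigl(F_{k,m}^{[\ge 0]}(x)\bigr)_{k\ge 0}$, with $m$ fixed, satisfies the infinite linear system~\eqref{eqinfinitesystemsimple-2}. Then argue that this system has a unique power series solution, so that it must coincide with $(G_{m;k}(x))_{k\ge0}$. Here $F_{k,m}^{[\ge 0]}(x)$ counts weighted allowed paths from level $k$ to level $m$ that stay $\ge 0$; the restriction ``$\ge 0$'' is automatic, since every step in $\mathcal{S}_k$ keeps the path nonnegative.

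\textbf{Step 1: the recursion.} Take an allowed path starting at level $k$ and ending at level $m$. Either it is the empty path, which occurs exactly when $k=m$ and contributes the term $\delta_{k,m}$, or it has a first step $s\in\mathcal{S}_k$. By~\eqref{eqSk1} and~\eqref{eqSk2}, such a step has the form $(1+r,\ell-j)$ with $0\le \ell\le L$, $0\le j\le \min(k,J)$ and weight $[x^r]Q_{\ell,j}(x)$. After it, the path is at level $k+\ell-j$, and the remainder is an arbitrary allowed path from that level to level $m$. Summing over $r$ gives the factor $xQ_{\ell,j}(x)$, so
\[
F_{k,m}^{[\ge 0]}(x) = \delta_{k,m} + x\sum_{\ell=0}^L\sum_{j=0}^{\min(k,J)} Q_{\ell,j}(x)\, F_{k+\ell-j,m}^{[\ge 0]}(x).
\]
This is exactly the coefficient of $u^k$ in $u^m + x\sum_\ell Q_\ell(x,u)\Delta^\ell G(x,u)$, by the same coefficient extraction that turned~\eqref{eqoneequationsimple} into~\eqref{eqinfinitesystemsimple}. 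Equivalently, $\sum_k F_{k,m}^{[\ge0]}(x)u^k$ satisfies~\eqref{eqoneequation-m}. I will also check that the index ranges printed in~\eqref{eqinfinitesystemsimple-2} (for instance $\ell$ starting at $1$) are typographical and that the correct range is $0\le\ell\le L$.

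\textbf{Step 2: uniqueness.} Every step has horizontal length at least $1$ and carries a factor $x$. Hence $[x^n]$ of the right-hand side involves only coefficients $[x^{n'}]$ with $n'<n$ of the unknowns, and the system determines all coefficients recursively in $n$. Each $[x^n]F_{k,m}^{[\ge0]}$ is a finite sum: a path of length $n$ has at most $n$ steps and vertical increments bounded by $L$. Therefore the power series solution of~\eqref{eqoneequation-m} is unique, and $G_{m;k}=F_{k,m}^{[\ge0]}$.

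\textbf{Main obstacle.} The only delicate point is bookkeeping. The notation $F_{k_1,k_2}$ (start level, end level) must be matched consistently with $G_{m;k}$, the coefficient of $u^k$ in the solution with inhomogeneity $u^m$. The recursion above decomposes paths by their first step, which is what makes the start level $k$ correspond to the power $u^k$. I must also check that the truncation $j\le\min(k,J)$ coming from $\Delta^\ell$ matches the multiset $\mathcal{S}_k$ exactly; this matching was already established in Lemma~\ref{Le2.1}.
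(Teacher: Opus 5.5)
Your proposal is correct and follows the paper's proof. Like the paper, you decompose each allowed path from level $k$ to level $m$ after its first step, which lies in $\mathcal{S}_k$ and so matches the truncation $j\le\min(k,J)$ coming from $\Delta^\ell$, with the empty path supplying $\delta_{k,m}$, exactly as in Lemma~\ref{Le2.1}. Your explicit uniqueness argument (each step carries a factor $x$, so coefficients are determined inductively in $n$) and your correction of the misprinted index ranges in~\eqref{eqinfinitesystemsimple-2} fill in details the paper leaves implicit.
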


\begin{proof}
The proof is precisely the same at that of Lemma~\ref{Le2.1}. We
just partition the paths after the first step. Note that the $1 = \delta_{m,m}$ 
appears now from the zero step path from $(0,m)$ to $(0,m)$. 
\end{proof}

Now we can finish the proof of Theorem~\ref{Th1} for one equation.
By Lemma~\ref{Le2.2} and Lemma~\ref{Le2.3} it follows that
the functions $G_{m;k}(x)$ corresponds to a finite grammar.
By (\ref{eqFGrelation}) we have
\[
F_k(x) = \sum_{m=0}^m P_m(x) G_{m;k}(x)
\]
and, thus, $F_k(x)$ corresponds to a finite grammar, too.

\subsection{An example}

Let us consider the example
\begin{equation}\label{eqgeneralexample}
    F(x,u) = 1 + xu F(x,u) +  x(1+u^3) \Delta^1 F(x,u).
\end{equation}
This equation translates into the following infinite system:
\begin{align*}
    F_{0,0}(x) &= 1 + x F_{1,0}(x), \\
    F_{1,0}(x) &= x F_{0,0}(x) + x F_{2,0}(x), \\
    F_{2,0}(x) &= x F_{1,0}(x) + x F_{3,0}(x), \\
    F_{3,0}(x) &= x F_{2,0}(x) + x F_{4,0}(x) + x F_{1,0}(x), \\
    F_{4,0}(x) &= x F_{3,0}(x) + x F_5(x) + x F_{2,0}(x), \\
    &\vdots
\end{align*}
This means that the equations for $F_{k,0}(x)$ are stable for $k\ge 3$ but
have a different scheme for $k< 3$. By using the general notation we have
$L=1$ and $J=3$ and the step (multi-)sets are given by
\begin{align*}
    \mathcal{S}_0 &= \{(1,1)\}, \\
    \mathcal{S}_1 &= \{(1,1), (1,-1)\}, \\
    \mathcal{S}_2 &= \{(1,1), (1,-1)\}, \\
    \mathcal{S}_k &= \{(1,1), (1,-1), (1,-2)\} \qquad (k\ge 3).
\end{align*}
The first observation is that the functions $F_{k_1,k_2}^{[\ge k]}(x)$ are stable
quite early. Actually we have
\[
F_{k_1,k_2}^{[\ge k]}(x) = F_{k_1-k+1,k_2-k+1}^{[\ge 1]} \qquad (k\ge 1).
\]

We start with an analysis of the prime walks. By Lemma~\ref{Le2.2} we have
\[
A_{0,0}(x) = x F_{1,1}^{[\ge 1]}(x) x, \qquad A_{0,1}(x) = x, \qquad A_{1,i}(x) = 0 \quad (i\ge 2)
\]
and
\begin{align*}
    A_{k,0}(x) &= x F_{k+1,k+1}^{[\ge k+1]}(x) x + x F_{k+1,k+2}^{[\ge k+1]}(x) x
     = x F_{1,1}^{[\ge 1]}(x) x + x F_{1,2}^{[\ge 1]}(x) x= A_{1,0}(x)  \quad (k\ge 1), \\
     A_{k,1}(x) &= x \quad (k\ge 1), \qquad A_{k,i}(x) = 0 \quad (k\ge i\ge 2).
\end{align*}
as well as
\begin{align*}
    \overline A_{k,0}(x) &= A_{k,0}(x) \quad (k\ge 0), \\
    \overline A_{1,1}(x) &= x, \qquad  
    \overline A_{k,1}(x) = x + x  F_{k+1,k+1}^{[\ge k+1]}(x) x 
    = x + x  F_{1,1}^{[\ge 1]}(x) x  = \overline A_{2,1} \quad (k\ge 2),\\
    \overline A_{k,2}(x) &= \overline A_{3,2}(x) = x \quad (k\ge 3), \qquad
    \overline A_{k,i}(x) = 0 \quad (k\ge i\ge 3).
\end{align*}
This means that the only unknown prime walk functions are
\[
A_{0,0}(x),\ A_{1,0}(x),\ \overline A_{2,1}(x),
\]
where (only) the functions
\[
F_{1,1}^{[\ge 1]}(x), F_{1,2}^{[\ge 1]}(x)
\]
appear on the right hand of the equation. For these two functions we have
(again by Lemma~\ref{Le2.2} and by the above relations for the prime walk functions)
\begin{align*}
    F_{1,1}^{[\ge 1]}(x) &= 1 + \overline A_{1,0}(x) F_{1,1}^{[\ge 1]}(x)
    = 1 +  A_{1,0}(x) F_{1,1}^{[\ge 1]}(x), \\
    F_{1,2}^{[\ge 1]}(x) &= \overline A_{1,0}(x) F_{1,2}^{[\ge 1]}(x) 
    +  A_{1,1}(x) F_{2,2}^{[\ge 2]}(x) = 
     A_{1,0}(x) F_{1,2}^{[\ge 1]}(x) 
    +  x F_{1,1}^{[\ge 1]}(x).
\end{align*}
Already this system of $3+2$ equations is solvable. Actually we can start with
the strongly connected system of $3$ equations for 
$A_{1,0}(x)$, $F_{1,1}^{[\ge 1]}(x)$, $F_{1,2}^{[\ge 1]}(x)$:
\begin{align*}
    A_{1,0}(x) &= x F_{1,1}^{[\ge 1]}(x) x + x F_{1,2}^{[\ge 1]}(x) x, \\
        F_{1,1}^{[\ge 1]}(x) &=1 +  A_{1,0}(x) F_{1,1}^{[\ge 1]}(x), \\
    F_{1,2}^{[\ge 1]}(x) &= 
     A_{1,0}(x) F_{1,2}^{[\ge 1]}(x) 
    +  x F_{1,1}^{[\ge 1]}(x).
\end{align*}
By general theory (see \cite{Drmota2009}) it is known that all three solution functions
have a common dominating squareroot singularity $x_0 > 0$.
This holds then also for $A_{0,0}(x)$ and $\overline A_{2,1}(x)$.

However, it is not clear that holds for all the other functions.
For example, from 
\[
F_{0,0}^{[\ge 0]}(x) = 1 + \overline A_{0,0}(x) F_{0,0}^{[\ge 0]}(x)
\]
it follows that
\[
F_{0,0}^{[\ge 0]}(x) = \frac 1{1- \overline A_{0,0}(x) }.
\]
If $\overline A_{0,0}(x_0) \ge 1$ then $F_{0,0}^{[\ge 0]}(x)$ has a different
kind of dominating singularity. 

But there is a simple trick that shows that such a situation cannot occur.
By using the observation that the step sets are weakly increasing from one level to the next,
it follows that  
\[
[x^n] F_{1,1}^{[\ge 1]}(x) \ge [x^n] F_{0,0}^{[\ge 0]}(x) \quad\mbox{and}\quad
[x^n] A_{1,i}(x) \ge [x^n] A_{0,i}(x)
\]
for all $n\ge 0$. In particular we can write
\begin{align*}
    F_{1,1}^{[\ge 1]}(x) &=  F_{0,0}^{[\ge 0]}(x)  + D_1(x), \\
    A_{1,0}(x) &=  A_{0,0}(x) + B_1(x).
\end{align*}
where $D_1(x), B_1(x)$ are power series with nonnegative coefficients.

Due to the monotonicity of the step sets, there is also a monotonicity property in the corresponding equations. That is, the equations for $F_{1,1}^{[\ge 1]}(x), A_{1,0}(x), A_{1,1}(x)$ contain at least the corresponding terms as the equations for $F_{0,0}^{[\ge 0]}(x), A_{0,0}(x), A_{0,1}(x)$, respectively. 
For example, the equation $A_{1,0}(x) = x F_{1,1}^{[\ge 1]}(x) x + x F_{1,2}^{[\ge 1]}(x)$ has one more term than the equation $A_{0,0}(x) = x F_{1,1}^{[\ge 1]}(x) x$.
This implies that we split the above system of equations in the following way:
\begin{align*}
    F_{0,0}^{[\ge 0]}(x)  + D_1(x) &= 1 +  (A_{0,0}(x) + B_1(x))(F_{0,0}^{[\ge 0]}(x)  + D_1(x)), \\
    A_{0,0}(x) + B_1(x) &= x (F_{0,0}^{[\ge 0]}(x)  + D_1(x)  ) x + xF_{1,2}^{[\ge 2]}(x)x .
\end{align*}
Due to the monotonicity property and by comparing with the two equations
\begin{align*}
    F_{0,0}^{[\ge 0]}(x) &= 1 +  A_{0,0}(x) F_{0,0}^{[\ge 0]}(x), \\ 
    A_{0,0}(x) &= x F_{1,1}^{[\ge 1]}(x) x = x(F_{0,0}^{[\ge 0]}(x)  + D_1(x))x,
\end{align*}
we obtain new equations for $D_1(x),B_1(x)$:
\begin{align*}
    D_1(x) &= B_1(x) F_{0,0}^{[\ge 0]}(x) + (A_{0,0}(x) + B_1(x)) D_1(x), \\
    B_1(x) &=  xF_{1,2}^{[\ge 2]}(x)x.
\end{align*}
The only unknown function here is $F_{1,2}^{[\ge 2]}(x)$. By adding the corresponding
equation
\[
F_{1,2}^{[\ge 1]}(x) =      A_{1,0}(x) F_{1,2}^{[\ge 1]}(x)     +  x F_{1,1}^{[\ge 1]}(x) =
   (A_{0,0}(x) + B_1(x) ) F_{1,2}^{[\ge 1]}(x)     +  x (F_{0,0}^{[\ge 0]}(x) + D_1(x)),
\]
we end up with a strongly connected positive system of equations that contains
the function $F_{0,0}^{[\ge 0]}(x)$ as well:
\begin{align*}
    F_{0,0}^{[\ge 0]}(x) &= 1 +  A_{0,0}(x) F_{0,0}^{[\ge 0]}(x), \\ 
    A_{0,0}(x) &=  x(F_{0,0}^{[\ge 0]}(x)  + D_1(x))x, \\
    D_1(x) &= B_1(x) F_{0,0}^{[\ge 0]}(x) + (A_{0,0}(x) + B_1(x)) D_1(x), \\
    B_1(x) &=  xF_{1,2}^{[\ge 2]}(x)x, \\
    F_{1,2}^{[\ge 1]}(x) &=   
   (A_{0,0}(x) + B_1(x) ) F_{1,2}^{[\ge 1]}(x)     +  x (F_{0,0}^{[\ge 0]}(x) + D_1(x)).
\end{align*}
Thus, it follows that all these functions have a common square-root singularity at 
the dominant singularity at $x_0 > 0$. 

There are now several ways to show that all appearing generating functions
have the same dominating square-root singularity. In this example there is a simple
and direct way. 
Since $F_{1,1}^{[\ge 1]}(x) =  F_{0,0}^{[\ge 0]}(x)  + D_1(x)$ and $A_{1,0}(x) =  A_{0,0}(x) + B_1(x)$
the same holds for $F_{1,1}^{[\ge 1]}(x)$ and $A_{1,0}(x)$. 
Furthermore, since $\overline A_{2,1}(x) = x + x(F_{0,0}^{[\ge 0]}(x) + D_1(x))x$ we get also
the same property for $\overline A_{2,1}(x)$. Now we can proceed as in the last part of the
proof of Lemma~\ref{Lefinitesystem1} and show inductively that all functions
$F_{k_1,k_2}^{[\ge k]}(x)$ have a dominating square-root singularity at $x_0$.

\subsection{Proof of Theorem~\ref{Th2} for one equation}\label{sec:proofTh2single}

Now we use the assumption that from every level $k_1$ we can reach every level $k_2$ 
(either the same or any other) by an allowed lattice path that stays $\ge k$, where $k\ge \min(k_1,k_2)$. 
This is equivalent to the statement that the (infinite) system of equations 
(\ref{eqinfinitesystemsimple}) is strongly connected. (If $F_{k,0}(x)$ depends (after some iterations)
on $F_{0,0}(x)$ then there has to be an allowed path from level $k$ to level $0$. Similarly if
$F_{0,0}(x)$ depends (after some iterations) on $F_{k,0}(x)$ then there has to be an 
allowed path from level $0$ to level $k$. By combining such paths we can start at any level $k_1$
and reach with an allowed path every level $k_2$. Due to monotonicity property and by shifting 
a path accordingly we can also assume that we stay $\ge k$, where $k\ge \min(k_1,k_2)$.)
%This condition has to be properly rephrased for Theorem~\ref{Th2}.

We recall that we already know that
all appearing generating functions correspond to a finite grammar. 
Thus, the kind of singularities (as well as the asymptotic behavior of the coefficients)
is quite restricted, see \cite{BanderierDrmota2015}.

The starting point of the proof of Theorem~\ref{Th2} is the following observation on 
the singular behavior of the functions $F_{k,k}^{[\ge k]}(x)$.

\begin{lemma}\label{Le2.6-1}
    Suppose that the assumptions of Theorem~\ref{Th2} are satisfied. Then 
    the functions $F_{k,k}^{[\ge k]}(x)$, $k\ge 0$, have a common dominating
    square-root singularity at some $x_0>0$.
\end{lemma}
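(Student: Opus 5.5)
We will follow the strategy of the worked example and build a finite, strongly connected, positive polynomial system that contains all the functions $F_{k,k}^{[\ge k]}(x)$ for $0\le k\le J$. Then the Drmota--Lalley--Woods theorem (see \cite{Drmota2009}) gives a common dominating square-root singularity $x_0>0$.

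The key structural input is monotonicity of the step multi-sets, $\mathcal{S}_k\subseteq\mathcal{S}_{k+1}$. Shifting a path up by one level maps allowed paths staying $\ge k$ injectively (weights preserved) into allowed paths staying $\ge k+1$. Hence, coefficientwise,
\[
F_{k,k}^{[\ge k]}\le F_{k+1,k+1}^{[\ge k+1]},\qquad A_{k,i}\le A_{k+1,i},\qquad \overline A_{k,i}\le\overline A_{k+1,i},
\]
and analogously for the $F_{k_1,k_2}^{[\ge k]}$ with shifted indices. Accordingly we introduce the nonnegative differences
\[
D_{k+1}=F_{k+1,k+1}^{[\ge k+1]}-F_{k,k}^{[\ge k]},\qquad B_{k+1,i}=A_{k+1,i}-A_{k,i},
\]
and similarly for the reverse prime walks and the off-diagonal $F$'s, for $0\le k<J$. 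By \eqref{eqAkstable} and \eqref{eqFk1k2k-relation} these differences vanish for $k\ge J$, so only finitely many appear.

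Next we derive equations for the differences. The equations of Lemma~\ref{Le2.2}, restricted to the finite range of Lemma~\ref{Lefinitesystem1}, have the property that the right-hand side at level $k+1$ contains (after the shift) every monomial of the right-hand side at level $k$, plus extra terms coming from the steps in $\mathcal{S}_{k+1}\setminus\mathcal{S}_k$. We substitute level-$(k+1)$ functions as level-$k$ functions plus differences, expand the products, and subtract the level-$k$ equation. This leaves a polynomial with nonnegative coefficients for each difference: new-step terms plus terms in which at least one factor is a difference. We then replace every original level-$(k+1)$ unknown by "base $+$ differences", taking the level-$0$ functions as base (so $F_{0,0}^{[\ge 0]}$ is an unknown). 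The result is a finite positive polynomial system whose solutions are the base functions together with all differences, and every $F_{k,k}^{[\ge k]}$ is a positive sum of them. Nonlinearity is inherited from the prime-walk equations, for example $A_{k,0}\ni xQ\,F\,xQ$.

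The main obstacle is proving that this system is strongly connected, possibly after discarding components that are identically zero. This is where the hypothesis enters: every level can reach every other level by an allowed path staying above the minimum. We plan to translate dependency in the system into path statements. A difference $D_{k+1}$ or $B_{k+1,i}$ counts precisely those paths that use at least one step (or touch a level) unavailable one level lower. Strong connectivity of the path graph lets any such "new" path be concatenated with an arbitrary base excursion and conversely, and this yields dependency edges in both directions between each difference and $F_{0,0}^{[\ge 0]}$. We would first verify this on the generators: $F_{0,0}$ depends on $A_{0,0}$; $A_{0,0}$ depends on $F_{1,1}^{[\ge 1]}$ and hence on $D_1$; and each difference contains a term with a base factor multiplied by a difference or a new step. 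Nondegeneracy conditions (the system is not affine, and the Jacobian at $0$ is nilpotent) follow from the factor $x$ in every step and from the quadratic prime-walk terms. Finally, each $F_{k,k}^{[\ge k]}$ is a positive combination of the solution components, so it inherits the common square-root singularity at $x_0$. Any periodicity is handled as in \cite{BanderierDrmota2015}.
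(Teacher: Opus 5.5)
Your overall route matches the paper's: monotonicity of the $\mathcal{S}_k$, the differences $D_j$ and $B_j$, a positive polynomial system for them, then Drmota--Lalley--Woods. The gap is at exactly the step you flag as the main obstacle, and it has two parts.

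First, the target is wrong. The full finite difference system built on the range of Lemma~\ref{Lefinitesystem1} is in general \emph{not} strongly connected, and discarding zero equations does not repair this. It contains nonzero unknowns on which nothing else depends. For example, $F_{L+J-1,0}^{[\ge 0]}$ is nonzero under the hypothesis of Theorem~\ref{Th2}. Yet no prime-walk equation needs it: those equations involve only $F^{[\ge k+1]}$-type functions, whose base parts have first index at most $L-1$. Nor does any other $F$-equation in the finite range need it. The paper therefore does not aim at strong connectivity of the whole system. It takes the strongly connected component of the dependency graph that contains $F_{0,0}^{[\ge 0]}$, and argues that $A_{0,0}$ and all nonzero $D_j$, $B_j$ lie in it. That is all that is needed, since $F_{k,k}^{[\ge k]}=F_{0,0}^{[\ge 0]}+D_1+\cdots+D_k$.

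Second, the mechanism you propose cannot produce the needed edges. An edge of the dependency graph is a syntactic fact: an unknown occurs on the right-hand side of a particular equation. Concatenating paths only gives coefficientwise inequalities such as $F_{0,0}^{[\ge 0]}\ge w(p_1)w(p_2)\,x^{n_1+n_2}D_j$, i.e.\ domination in the sense of Lemma~\ref{Le2.6-2}. That is how the paper uses concatenation, in Lemma~\ref{Le6.2-3}, to pass from $F_{k,k}^{[\ge k]}$ to $F_{k_1,k_2}^{[\ge k]}$; it says nothing about which variables occur in which equations.

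The missing ingredient is the explicit relation \eqref{eqDjrep}. You get it by subtracting $F_{j-1,j-1}^{[\ge j-1]}=1+A_{j-1,0}F_{j-1,j-1}^{[\ge j-1]}$ from $F_{j,j}^{[\ge j]}=1+A_{j,0}F_{j,j}^{[\ge j]}$. From it one reads off three facts:
\begin{itemize}
\item $D_j\ne 0$ iff $B_j\ne 0$;
\item every nonzero $D_j$ depends on $F_{0,0}^{[\ge 0]}$ and $A_{0,0}$;
\item $D_{J'}$, with $J'$ the largest index such that $D_{J'}\ne 0$, depends on all nonzero $B_j$, $D_j$.
\end{itemize}
The return direction, from $F_{0,0}^{[\ge 0]}$ through $A_{0,0}$ and the $F^{[\ge 1]}$-terms of its equation to the differences, must likewise be traced in the equations, not in the paths.

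A smaller point: the term $xQ\,F\,xQ$ in $A_{k,0}$ is linear in the unknowns, so it does not make the system non-affine. Non-affinity comes from products such as $A_{0,0}F_{0,0}^{[\ge 0]}$ or $B_jF_{0,0}^{[\ge 0]}$, once these lie in one component.
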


\begin{proof}
We recall that it is sufficient to consider the cases $0\le k \le J$. 
For $k > J$ we have $F_{k,k}^{[\ge k]}(x) = F_{J,J}^{[\ge J]}(x)$.
Furthermore by the monotonicity property of the step multi-sets $\mathcal{S}_k$ we 
can write
\begin{equation}\label{eqdecomp1}
      F_{k,k}^{[\ge k]}(x) = F_{0,0}^{[\ge 0]}(x) + D_{1}(x) + \cdots + D_{k}(x), \qquad (k\le J),
\end{equation}
where the functions $D_j(x)$, $1\le j\le J$, have nonnegative coefficients.
Similarly we can represent 
\begin{equation}\label{eqdecomp2}
  A_{k,0}(x) = A_{0,0}(x) + B_{1}(x) + \cdots + B_{k}(x),, \qquad (k\le J),
\end{equation}
where the functions $B_j(x)$, $1\le j\le J$, have nonnegative coefficients.
Since we have the functional equations
\[
    F_{k,k}^{[\ge k]}(x) = 1 + A_{k,0}(x) F_{k,k}^{[\ge k]}(x) \quad (k\le J)
\]
it follows (by taking differences and by using the above representations) that
\begin{align}\label{eqDjrep}
    D_j(x) &= B_j(x) \left( F_{0,0}^{[\ge 0]}(x) + D_1(x) + \cdots + D_{j-1}(x)   \right) \\
    & + \left( A_{0,0}(x) + B_{1}(x) + \cdots + B_{j}(x)  \right) D_j(x), \qquad (1\le j \le J).
    \nonumber
\end{align}
In particular, (\ref{eqDjrep}) shows that $D_j(x) \ne 0$ if and only if $B_j(x) \ne 0$
(note that by assumption $A_{0,0}(x)\ne 0$ and $F_{0,0}^{[\ge 0]}(x)\ne 0$).
Thus, we can restrict ourselves to those $j$ for which $D_j(x) \ne 0$. 
Let $J'$ the maximum $j\in \{1,2,\ldots, J\}$ for which $D_j(x) \ne 0$.
Then the equation for $D_{J'}(x)$ contains on the right hand side the 
functions $A_{0,0}(x)$, $ F_{0,0}^{[\ge 0]}(x)$ and all functions
$B_j(x)$, $D_j(x)$ with $1\le j\le J$ that are non-zero. 
Furthermore, the equation $F_{0,0}^{[\ge 0]}(x) = 1 + A_{0,0}(x) F_{0,0}^{[\ge 0]}(x)$
as well as the equations for $D_j(x)$ $1\le j \le J$, for which $D_j(x) \ne 0$, contains
on the right hand side $A_{0,0}(x)$, $F_{0,0}^{[\ge 0]}(x)$. 

Similarly to the decompositions (\ref{eqdecomp1}) and (\ref{eqdecomp2}) we can
decompose all other functions $F_{k_1,k_2}^{[\ge k]}(x)$, $A_{k,i}(x)$, $\overline A_{k,i}(x)$
and due to the monotonicity property of the step multi-sets $\mathcal{S}_k$ 
we can replace the old system for $F_{k_1,k_2}^{[\ge k]}(x)$, $A_{k,i}(x)$, $\overline A_{k,i}(x)$
by a new one that is again a positive polynomial system. 
By Lemma~\ref{Lefinitesystem1} it is sufficient to consider a finite (old) system of equations.
Hence, it is also sufficient to consider a finite new system (where -- without loss of 
generality -- we remove zero equations).

The first goal is to find a strongly connected system of equations that computes
$F_{0,0}^{[\ge 0]}(x)$. For this purpose we consider the corresponding dependency
graph $G$ and restrict ourselves to the strongly connected component $G$ 
that contains $F_{0,0}^{[\ge 0]}(x)$. 
By the above observations all functions $D_j(x)$ $1\le j \le J$, for which $D_j(x) \ne 0$, as
well as $A_{0,0}(x)$ and all functions $B_j(x)$ $1\le j \le J$, for which $D_j(x) \ne 0$,
are contained in this subsystem. Hence, all theses functions have a common
dominant square-root singularity $x_0> 0$. Consequently the functions 
\[
      F_{k,k}^{[\ge k]}(x) = F_{0,0}^{[\ge 0]}(x) + D_{1}(x) + \cdots + D_{k}(x), \qquad (k\le J),
\]
and as well the functions $F_{k,k}^{[\ge k]}(x) = F_{J,J}^{[\ge J]}(x)$, $k\ge J$, have this
dominant square-root singularity $x_0> 0$. 
\end{proof}

The next lemma is a general property for generating functions that correspond to 
a finite grammar. By \cite{BanderierDrmota2015} the dominant singularities can be only of
special (dyadic) types $(1-x/x_0)^\alpha$. Furthermore, 
the asymptotics of the coefficients -- at least if
we restrict ourselves to a proper residue class $n \equiv r \bmod m$ -- correspond to the 
dominant singularity, that is $c x_0^{-n} n^{-\alpha-1}$ 
(with some $c> 0$). We say that a generating function $f(x)$ (of this type) 
with leading asymptotics $c x_0^{-n} n^{-\alpha-1}$, $n \equiv r \bmod m$,  is dominated by $g(x)$ 
with leading asymptotics $\overline c \overline x_0^{-n} n^{-\overline \alpha-1}$, 
$n \equiv \overline r \bmod \overline m$, if
$x_0^{-n} n^{-\alpha-1} = O\left( \overline x_0^{-n} n^{-\overline \alpha-1}  \right)$, that is,
$\overline x_0 < x_0$ or $\alpha \le \overline \alpha$ (if $\overline x_0 = x_0$).

\begin{lemma}\label{Le2.6-2}
    Suppose that $f(x)$ and $g(x)$ are non-polynomial generating functions that correspond
    to a finite grammar such that
    \[
    g(x) = P(x) f(x) + R(x),
    \]
    where $P(x)$ is a non-zero polynomial with nonnegative coefficients and 
    $R(x)$ a power series with nonnegative coefficients. 
    Then $f(x)$ is dominated by $g(x)$.
\end{lemma}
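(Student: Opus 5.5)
We need to show that $x_0^{-n} n^{-\alpha-1} = O(\overline x_0^{-n} n^{-\overline\alpha-1})$, where $(x_0,\alpha)$ belong to $f$ and $(\overline x_0,\overline\alpha)$ to $g$. The plan is to compare coefficients directly, using nonnegativity, and then translate the coefficient comparison into a comparison of the dominant singular data supplied by \cite{BanderierDrmota2015}.

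First we compare radii of convergence. Since $P$ and $R$ have nonnegative coefficients, $[x^n]g \ge [x^n]\bigl(P(x)f(x)\bigr)$ for all $n$. Choose a monomial $p_s x^s$ of $P$ with $p_s>0$. Then
\[
[x^{n+s}] g(x) \ge p_s\, [x^n] f(x).
\]
By Pringsheim's theorem, both $f$ and $g$ have a positive real dominant singularity, namely $x_0$ and $\overline x_0$ respectively. The displayed inequality gives $\overline x_0 \le x_0$. If $\overline x_0 < x_0$, $f$ is dominated by $g$ and we are done. We may therefore assume $\overline x_0 = x_0$.

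In the case $\overline x_0 = x_0$ we compare the exponents. By \cite{BanderierDrmota2015} (the result recalled just before the lemma), $f$ has leading asymptotics $[x^n]f \sim c\,x_0^{-n} n^{-\alpha-1}$ along the residue class $n\equiv r \bmod m$, with $c>0$. Likewise $g$ has asymptotics $\overline c\,x_0^{-n}n^{-\overline\alpha-1}$ along $n\equiv \overline r\bmod \overline m$, and the coefficients of $g$ are $O(x_0^{-n}n^{-\overline\alpha-1})$ for all $n$. Applying the coefficient inequality along $n\equiv r \bmod m$ gives
\[
c\,x_0^{-n} n^{-\alpha-1}(1+o(1)) \le p_s^{-1}[x^{n+s}]g = O\bigl(x_0^{-n-s}(n+s)^{-\overline\alpha-1}\bigr).
\]
Since the factors $x_0^{-s}$ and $(n+s)^{-\overline\alpha-1}/n^{-\overline\alpha-1}$ are bounded, this forces $n^{-\alpha-1} = O(n^{-\overline\alpha-1})$, that is, $\alpha\le\overline\alpha$. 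This is exactly the required domination.

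The main obstacle is justifying the uniform bound $[x^n]g = O(x_0^{-n}n^{-\overline\alpha-1})$ over all $n$, and not only along a single residue class. This bound is needed because $n+s$ may lie in a residue class different from $\overline r$. I would handle it by using the full statement from \cite{BanderierDrmota2015}: all dominant singularities lie on the circle $|x|=x_0$, they are finitely many, they are all of the form $(1-x/\zeta)^{\beta}$, and the maximal exponent over them is governed by the positive one, $\zeta=x_0$. Singularity analysis (transfer theorems) then yields the uniform bound with the exponent $\overline\alpha$ of the leading asymptotics. Here I interpret $\overline\alpha$ as the exponent giving the largest growth among the residue classes in which the coefficients are nonzero.

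Finally, the non-polynomial assumption guarantees that $x_0<\infty$, so that $c>0$ exists on an infinite residue class. Without this, the comparison of asymptotics above would be vacuous.
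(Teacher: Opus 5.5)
Your proof is correct and is essentially the paper's argument, namely the coefficient domination $[x^n]g\ge [x^n]P(x)f(x)$ combined with the Banderier--Drmota asymptotics; your single-monomial shift $[x^{n+s}]g\ge p_s[x^n]f$, the Pringsheim comparison of radii, and the uniform bound on $[x^n]g$ across residue classes spell out what the paper's two-line proof leaves implicit. One slip, which also appears in the paper's own definition: $n^{-\alpha-1}=O(n^{-\overline\alpha-1})$ means $\overline\alpha\le\alpha$, not $\alpha\le\overline\alpha$; this does not affect the conclusion, because the $O$-relation you actually derive is the definition of domination.
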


\begin{proof}
First we observe that $P(x)f(x)$ has the same type of singularity as $f(x)$.
Furthermore, since
\[
[x^n] g(x) \ge [x^n] P(x) f(x) 
\]
it directly follows that $f(x)$ is dominated by $g(x)$. 
\end{proof}

The next observation will then conclude the proof of Theorem~\ref{Th2}.

\begin{lemma}\label{Le6.2-3}
    Suppose that the assumptions of Theorem~\ref{Th2} are satisfied.
    Then all functions $F_{k,k}^{[\ge k]}(x)$ have the same dominating singularity
    as $F_{0,0}^{[\ge 0]}(x)$. Furthermore the prime walk functions 
    $A_{k,i}(x)$, $\overline A_{k,i}(x)$ are either polynomials or have again
    the same dominating singularity as $F_{0,0}^{[\ge 0]}(x)$.
\end{lemma}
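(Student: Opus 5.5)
The first claim is essentially contained in Lemma~\ref{Le2.6-1}: from the decomposition \eqref{eqdecomp1} and the strongly connected subsystem built there, every $F_{k,k}^{[\ge k]}(x)$ with $k\le J$ has the dominating square-root singularity $x_0$ of $F_{0,0}^{[\ge 0]}(x)$. For $k>J$ we have $F_{k,k}^{[\ge k]}=F_{J,J}^{[\ge J]}$. So the work concerns the prime walk functions. By \eqref{eqAkstable}, \eqref{eqAkzero} and \eqref{eqAkknown} it suffices to treat the finitely many unknown ones in \eqref{eqAkunknown}; all others are zero, explicit polynomials, or copies of these.

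I would squeeze each non-polynomial prime walk function between $F_{0,0}^{[\ge 0]}$ from above and below in the dominance order of Lemma~\ref{Le2.6-2}.

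\emph{Upper bound.} Take a non-polynomial $A_{k,i}(x)$. By strong connectivity there are allowed paths $p_1$ from level $0$ to level $k$ staying $\ge 0$, and $p_2$ from level $k+i$ back to level $0$ staying $\ge 0$. Concatenating $p_1$, a prime walk in $\mathcal{A}_{n,k,i}$ and $p_2$ gives an injective map into excursions counted by $F_{0,0}^{[\ge 0]}$; the monotonicity $\mathcal S_m\subseteq\mathcal S_{m+1}$ ensures that all steps remain allowed. Hence
\[
F_{0,0}^{[\ge 0]}(x)=x^{a}w\,A_{k,i}(x)+R(x)
\]
with $R$ having nonnegative coefficients, and Lemma~\ref{Le2.6-2} shows that $A_{k,i}$ is dominated by $F_{0,0}^{[\ge 0]}$. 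The same argument applies to $\overline A_{k,i}$.

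\emph{Lower bound.} Use the structural equations of Lemma~\ref{Le2.2}. A non-polynomial $A_{k,i}$ must contain a term $xQ_{\ell_1,j_1}F^{[\ge k+i+1]}_{k+i_1,k+i_2}\,xQ_{\ell_2,j_2}$ with non-polynomial $F$, and by \eqref{eqFk1k2k-relation} this $F$ can be shifted to base level $\le J$. I would then show that such an $F^{[\ge m]}_{k_1,k_2}$ dominates $F_{m,m}^{[\ge m]}$, or at least contains a nonnegative multiple of it. Paths from $k_1$ to $k_2$ staying $\ge m$ can be routed through level $m$ and around $F_{m,m}^{[\ge m]}$-loops, using strong connectivity together with the shifting remark in Section~\ref{sec:proofTh2single} that paths can be kept above $\min(k_1,k_2)$. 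This gives
\[
F^{[\ge m]}_{k_1,k_2}=P\cdot F_{m,m}^{[\ge m]}+R .
\]
By the first part and Lemma~\ref{Le2.6-2}, $A_{k,i}$ then dominates the square-root behaviour at $x_0$. Together with the upper bound, its dominant singularity is at $x_0$. Since Lemma~\ref{Le2.6-2}'s order identifies radius and exponent, and \cite{BanderierDrmota2015} restricts the exponents to dyadic values, the exponent must be $\tfrac12$. The argument for $\overline A_{k,i}$ is analogous.

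The main obstacle is this lower-bound step. A non-polynomial prime walk function is only known to involve some non-polynomial $F^{[\ge m]}_{k_1,k_2}$, and the plain prime walk decomposition does not expose a factor $F_{m,m}^{[\ge m]}$ when $k_1\neq k_2$. I would first handle it with the recursion \eqref{eqFkrecuresionvariant}: its rewritten form has the factor $1/(1-A_{k_2,0})=F^{[\ge k_2]}_{k_2,k_2}$, which appears multiplicatively in $F^{[\ge m]}_{k_1,k_2}$ whenever that function is non-zero. Here one must check that the coefficient in front is a non-zero nonnegative series, not merely a polynomial, so that a version of Lemma~\ref{Le2.6-2} with $P$ replaced by a nonnegative series still applies.
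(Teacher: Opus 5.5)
Your argument is correct, but it is organized differently from the paper's.

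\emph{The paper's route.} For every $k\le\min(k_1,k_2)$, the paper shows that $F_{k_1,k_2}^{[\ge k]}$ and $F_{k,k}^{[\ge k]}$ dominate each other. It does this by concatenating with the connecting paths in both orders: $p_1$, then a path counted by $F_{k,k}^{[\ge k]}$, then $p_2$; and conversely $p_2$, a path counted by $F_{k_1,k_2}^{[\ge k]}$, then $p_1$. It then reads off each prime walk function from Lemma~\ref{Le2.2} as a polynomial plus a combination of such $F$'s with nonnegative polynomial coefficients, which therefore inherits the square-root singularity.

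\emph{Your route.} You use only the first of these two embeddings; your lower bound is exactly that routing step. In place of the second embedding and the linear-combination step, you embed $\mathcal A_{n,k,i}$ (resp.\ $\overline{\mathcal A}_{n,k,i}$) directly into level-$0$ excursions. The resulting two-sided squeeze fixes both the radius $x_0$ and the exponent $\tfrac12$. So you never need to argue that adding square-root-singular series with nonnegative coefficients cannot cancel the singular part. You also don't need the dyadic restriction from \cite{BanderierDrmota2015}, beyond the existence of the asymptotic form.

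\emph{What the paper's route buys.} It yields the stronger intermediate fact that every $F_{k_1,k_2}^{[\ge k]}$ has the square-root singularity at $x_0$. In particular this covers $F_{k,0}=F_{k,0}^{[\ge 0]}$, which is what Theorem~\ref{Th2} actually needs. Your squeeze would give this as well, but you do not record it.

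\emph{Your ``main obstacle'' is not one.} The routing argument you had already stated settles the lower bound. For the alternative via \eqref{eqFkrecuresionvariant}, any nonzero nonnegative series $S$ contains a monomial $c\,x^a$ with $c>0$. Then $S\,F=c\,x^aF+(\text{nonnegative series})$, and Lemma~\ref{Le2.6-2} applies exactly as stated. For $k_2\le k_1$ you would use the other rewritten equations, with the factor $1/(1-\overline A_{k_1,0})=F_{k_1,k_1}^{[\ge k_1]}$.
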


\begin{proof}
By Lemma~\ref{Le2.6-1} all functions $F_{k,k}^{[\ge k]}(x)$, $0\le k \le J$, have
the same dominating (square-root) singularity (namely that of $F_{0,0}^{[\ge 0]}(x)$).

We consider now a function $F_{k_1,k_2}^{[\ge k]}(x)$ with $k\le \min(k_1,k_2)$. By assumption there is an
allowed path $p_1$ from level $k_1$ to level $k$ (that stays at levels $\ge k$ and goes $n_1$ units 
into the $x$-direction) as well an
allowed path $p_2$ from level $k$ to level $k_2$ (that stays at levels $\ge k$, too, and
goes $n_2$ units into the $x$-direction). Then we certainly have
\[
F_{k_1,k_2}^{[\ge k]}(x) = w(p_1)x^{n_1} F_{k,k}^{[\ge k]}(x) w(p_2) x^{n_2} + r(x)
\]
for some power series $r(x)$ with nonnegative coefficients. We just observe that all allowed paths
from level $k_1$ to level $k_2$ that stay at levels $\ge k$ contain the concatenation of
$p_1$, the paths corresponding to $F_{k,k}^{[\ge k]}(x)$, and $p_2$.
Thus, $F_{k,k}^{[\ge k]}(x)$ is dominated by $F_{k_1,k_2}^{[\ge k]}(x)$.

Similarly, we can concatenate $p_2$, the paths corresponding to $F_{k_1,k_2}^{[\ge k]}(x)$, and $p_1$
and obtain a part of all paths corresponding to $F_{k,k}^{[\ge k]}(x)$. In other terms we have
\[
F_{k,k}^{[\ge k]}(x) = w(p_2)x^{n_2} F_{k_1,k_2}^{[\ge k]}(x) w(p_1) x^{n_1} + \overline r(x)
\]
with a power series $\overline r(x)$ with nonnegative coefficients. 
Hence, $F_{k_1,k_2}^{[\ge k]}(x)$ is dominated by $F_{k,k}^{[\ge k]}(x)$. 

Putting these two observations together it follows that $F_{k_1,k_2}^{[\ge k]}(x)$ and $F_{k,k}^{[\ge k]}(x)$
have the same kind of dominating (square-root) singularity.

Finally, by applying the relations from Lemma~\ref{Le2.2} it follows that
the prime walk functions $A_{k,i}(x)$, $\overline A_{k,i}(x)$ are either polynomials in $x$
(with nonnegative coefficients) or polynomials (again with nonnegative coefficients) in 
$x$ and finitely many functions $F_{k_1,k_2}^{[\ge k]}(x)$. Thus, the
functions $A_{k,i}(x)$, $\overline A_{k,i}(x)$ are either polynomials or have
the same dominating (square-root) singularity as  $F_{0,0}^{[\ge 0]}(x)$.
\end{proof}

\section{Several catalytic equations} \label{sec:several}

We suppose now that we have a system of linear catalytic equations of the form~\eqref{eqcatalyticsystem}. 
Similarly to the above procedure, we translate these equations into a corresponding lattice-path problem. 

Again, we start with a simplified version, where we assume that 
$P_{s}(x,u) = 1$:
\begin{align}\label{eqcatalyticsystemsimplified}
    F_s(x,u) &= 1 + x \sum_{t=1}^d \sum_{\ell =0}^L Q_{s,t,\ell} (x,u) \Delta^\ell F_t(x,u), \qquad (1\le s \le d) \\
    & = 1 + x \sum_{t=1}^d \sum_{\ell =0}^L \sum_{j=0}^J Q_{s,t,\ell,j} (x) u^j \Delta^\ell F_t(x,u)
\end{align} 
Clearly this system translates into an infinite system ($1\le s\le d$, $k\ge 0$):
\begin{equation}\label{eqcatalyticsystemsimplified-2}
    F_{s;k,0}(x) = \delta_{k,0} +
x \sum_{t=1}^d  \sum_{\ell = 0}^L \sum_{m=0}^{\min(J,k)} Q_{s,t,\ell, j}(x) F_{t;k+\ell-j,0}(x),
\qquad (1\le s \le d,\, k\ge 0).
\end{equation}

In contract to the case of one equation we have to distinguish between
$d$ types (or ``colors'') for lattice paths. In particular we have $d^2$ sets
of step multi-sets $\mathcal{S}_{s,t,k}$, $1\le i,j\le d$, $0\le k \le J$, that are defined in the following way:
\begin{align}
\mathcal{S}_{s,t,k}  &= \bigcup_{\ell = 0}^L \bigcup_{j = 0}^{\min(k,J)}
\{ (1+r,\ell-j) : 0\le r \le {\rm deg}Q_{s,t,\ell,j}, [x^r] Q_{\ell,j}(x) \ne 0   \} 
\qquad (0\le k \le J)  \label{eqSstk}
\end{align}
Observe that the multi-sets $\mathcal{S}_{s,t,k}$, $1\le s,t,\le d$, might contain
a specific step several times, that is, they have to be labeled accordingly.
In order to simplify the notation we do not make this labels explicit.

Furthermore we define weights $w$ of a step $(1+r,\ell-j) \in \mathcal{S}_{s,t,k}$ by
\begin{align*}
w(1+r,\ell-j) &= [x^r] Q_{s,t,\ell,j}(x).
\end{align*}
We now say that a path $p = s_1\cdots s_M$ (of at least one step) that starts at level $k$ 
is of type $s$ and is allowed if the first step $s_1$ is contained in $\mathcal{S}_{s,t_0}$ for some $t_0$, and if consecutive steps are consistent in the following way:
after the first step $s_1$ that is contained in $S_{s,t_0,k}$ for some $t_0$ and
connects level $k$ to level $k_1$ we shall have $s_{2} \in S_{t_0,t_1,k_1}$ for some $t_1$ 
and some $k_1$ that connects level $k_1$ to level $k_2$, 
$s_{3} \in S_{t_1,t_2,k_2}$ for some $t_2$ and some $k_2$ etc.
And clearly the weight $w(p)$ of an allowed path $p = s_1\cdots s_M$ is defined by
\[
w(p) = \prod_{m=1}^M w(s_m).
\]
For the empty walk with zero steps, that is of every type, we set (as usual) $w(\emptyset) = 1$. 

As in the case of a single catalytic equation, we now define as $\mathcal{P}_{s;n,k}$ the multi-set
of allowed  paths of type $s$ from $(0,0)$ to $(n,k)$ that stay above the $x$-axis.
(By convention, the empty path is contained in $\mathcal{P}_{s;0,0}$ for $1\le s \le d$.)
Furthermore, if we set 
\[
F_{s;k,0}(x) = \sum_{n\ge 0}  \sum_{p\in \mathcal{P}_{s;n,k}} w(p) x^n \qquad (1\le s \le d,\, k\ge 0)
\]
then it is immediate that these functions satisfy the infinite system~\eqref{eqcatalyticsystemsimplified-2}. 
We just have to split the paths after the first step.

Finally, we show (very similarly to the case of one catalytic equation) that there is also a prime walk decomposition. For this purpose, we define
prime walks $\mathcal{A}_{s,t;k,i,n}$ of type $(s,t)$, $1\le s,t\le d$, as
all paths from $(0,k)$ to $(n,k+i)$, where the first step is in $S_{s,t_0,k}$ for some $t_0$, where
the last step is in $S_{s_0,t,k_1}$ for some $s_1$ and $k_1> k+i$ and connects 
level $k_1$ with level $k$, and where the path stays above the level~$k$ 
in between. The corresponding generating functions are defined by
\[
A_{s,t;k,i}(x) = \sum_{n> 0} \sum_{p\in \mathcal{A}_{s,t;k,i,n}} w(p) x^n.
\]
Similarly, we define reverse prime walks  $\overline{\mathcal{A}}_{s,t;k,i,n}$ that connect $(0,k)$ 
to $(n,k-i)$ and their generating functions $\overline A_{s,t;k,i}(x)$. 

We set
\begin{align*}
    \overline d_{s,t,k} &= \max\{ \ell-j : 0\le \ell \le L,\ 0\le j\le k,\ Q_{s,t,\ell,j}(x) \ne 0 \},
    \qquad (k \le J),
\end{align*}
and
\begin{align*}
    \underline d_{s,t,k} &= \max\{ j-\ell : 0\le \ell \le L,\ 0\le j\le k,\ Q_{s,t\ell,j}(x) \ne 0 \}
    \qquad (k \le J).
\end{align*}
Then we certainly have
\begin{align}
    A_{s,t;k,i}(x) &=0 \qquad \mbox{for $i> \overline d_{s,t,k}$ and $0\le k < J$},  \nonumber \\
    A_{s,t;k,i}(x) &=0 \qquad \mbox{for $i> \overline d_{s,t,J}$ and $k \ge J$}, \label{eqAkzero-2}\\
    \overline A_{s,t;k,i}(x) &=0 \qquad \mbox{for $i> \underline d_{s,t,k}$ and $0\le k < J$}, \nonumber\\
    \overline A_{s,t;k,i}(x) &=0 \qquad \mbox{for $i> \underline d_{s,t,J}$ and $k \ge J$}.\nonumber
\end{align}
Furthermore we have
\begin{align}
    A_{s,t,k,\overline d_k}(x) &= x \sum_{\ell-j = \overline d_k} Q_{s,t,\ell,j}(x) \qquad (0\le k < J),
     \nonumber \\
    A_{s,t;k,\overline d_J}(x) &= x \sum_{\ell-j = \overline d_J} Q_{s,t,\ell,j}(x) \qquad ( k \ge J), 
    \label{eqAkknown}\\
    \overline A_{s,t;k,\underline d_k}(x) &= x \sum_{\ell-j = -\underline d_k} Q_{s,t,\ell,j}(x) 
    \qquad (0\le k < J),  \nonumber \\
    \overline A_{s,t;k,\underline d_J}(x) &= x \sum_{\ell-j = -\underline d_J} Q_{s,t,\ell,j}(x) 
    \qquad (k\ge J).  \nonumber
\end{align}

We consider now a slightly more refined counting problem of lattice paths.
We define as $\mathcal{P}_{s,t;n,k_1,k_2}^{[\ge k]}$ the set
of allowed paths from $(0,k_1)$ to $(n,k_2)$ that stay at level $\ge k$, 
where the first step is in $S_{s,t_0,k_1}$ for some $t_0$ and where
the last step is in $S_{s_0,t,k'}$ for some $s_0$ and some $k'$ that connects
level $k'$ and level $k_2$. 
The corresponding generating functions are 
\[
F_{s,t;k_1,k_2}^{[\ge k]}(x) = \sum_{n\ge 0}  \sum_{p\in \mathcal{P}_{s,t;n,k_1,k_2}^{[\ge k]}} 
w(p)\, x^n \qquad (1\le s,t, \le d,\, k_1,k_2 \ge 0,\, k\ge \min(k_1,k_2)),
\]
and as in Lemma~\ref{Le2.2} we have 
\begin{align*}
    F_{s,t;k_1,k_1}^{[\ge k]}(x)
    &= 1 + \sum_{t_0=1}^d \sum_{m=k}^{k_1} \overline A_{s,t_0;k_1,k_1-m}(x) F_{t_0,t;m,k_1}^{[\ge k]}(x) 
    \qquad (k\le k_1),\\
    F_{s,t;k_1,k_2}^{[\ge k]}(x) 
    &= \sum_{t_0=1}^d \sum_{m=k}^{k_1} \overline A_{s,t_0;k_1,k_1-m}(x) F_{t_0,t;m,k_2}^{[\ge k]}(x)
    + \sum_{t_0=1}^d \sum_{m=k_1+1}^{k_2}  A_{s,t_0;k_1,m-k_1}(x) F_{t_0,t;m,k_2}^{[\ge m]}(x)
    \\
    &= \sum_{t_0=1}^d \sum_{m=k_1}^{k_2} F_{s,t_0;k_1,m}^{[\ge k]}(x) A_{t_0,t;m,k_2-m}(x) 
      \qquad (k\le k_1 < k_2),   \\
    F_{s,t;k_1,k_2}^{[\ge k]}(x) 
    &= \sum_{t_0=1}^d \sum_{m=k}^{k_1} \overline A_{s,t_0;k_1,k_1-m}(x) F_{t_0,t;m,k_2}^{[\ge k]}(x)
        \qquad (k\le k_2 < k_1),
\end{align*}
and
\begin{align*}
    A_{s,t;k,i}(x) &= \sum_{0\le \ell \le L, 0\le j\le k\, :\, \ell-j = i} x Q_{s,t,\ell,j}(x) \\
    & + \sum_{t_0=1}^d \sum_{t_1=1}^d \sum_{i_1 > i,\, i_2 > i} \ 
    \sum_{0\le \ell_1 \le L, 0\le j_1\le k\, :\, \ell_1-j_1 = i_1} x Q_{s,t_0\ell_1,j_1}(x) \  
    F_{t_0,t_1;k + i_1,k + i_2}^{[\ge k+i+1]}(x) \\
    & \qquad \qquad  \qquad \qquad  
    \sum_{0\le \ell_2 \le L, 0\le j_2\le \min(k + i_2,J)\, :\, \ell_2-j_2 = -(i_2-i)} 
    x Q_{t_1,t,\ell_2,j_2}(x) \\ 
    &\qquad \qquad \qquad (0\le k < J, \ 0\le i \le \overline d_{s,t,k}), \\
     A_{s,t,k,i}(x) &= \sum_{0\le \ell \le L, 0\le j\le J\, :\, \ell-j = i} x Q_{s,t,\ell,j}(x) \\
    & + \sum_{t_0=1}^d \sum_{t_1=1}^d \sum_{i_1 > i,\, i_2 > i} \ 
    \sum_{0\le \ell_1 \le L, 0\le j_1\le J\, :\, \ell_1-j_1 = i_1} x Q_{s,t_0\ell_1,j_1}(x) \  
    F_{t_0,t_1;k + i_1,k + i_2}^{[\ge k+i+1]}(x)  \\
    \\ & \qquad \qquad \qquad \qquad
    \sum_{0\le \ell_2 \le L, 0\le j_2\le J\, :\, \ell_2-j_2 = -(i_2-i)} x Q_{t_1,t\ell_2,j_2}(x), \\
    \\ & \qquad \qquad \qquad (k\ge J, \ 0\le i \le \overline d_{s,t,J}),
\end{align*}
as well as 
\begin{align*}
    \overline A_{s,t;k,i}(x) &= \sum_{0\le \ell \le L, 0\le j\le k\, :\, \ell-j = -i} x Q_{s,t,\ell,j}(x) \\
    & + \sum_{t_0=1}^d \sum_{t_1=1}^d \sum_{i_1 > 0,\, i_2 > 0} \ 
    \sum_{0\le \ell_1 \le L, 0\le j_1\le k\, :\, \ell_1-j_1 = i_1} x Q_{s,t_0\ell_1,j_1}(x) \ 
    F_{t_0,t_1;k + i_1,k + i_2}^{[\ge k+1]}(x) \\
    & \qquad \qquad \qquad \qquad 
    \sum_{0\le \ell_2 \le L, 0\le j_2\le \min( k + i_2,J) \, :\, \ell_2-j_2 = -(i + i_2)} 
    x Q_{t_1,t,\ell_2,j_2}(x), 
    \\ & \qquad \qquad \qquad (0\le k < J, \ 0\le i \le \underline d_{s,t,k}), \\
     \overline A_{s,t;k,i}(x) &= \sum_{0\le \ell \le L, 0\le j\le J\, :\, \ell-j = -i} x Q_{s,t,\ell,j}(x) \\
    & + \sum_{t_0=1}^d \sum_{t_1=1}^d \sum_{i_1 > 0,\, i_2 > 0} \ 
    \sum_{0\le \ell_1 \le L, 0\le j_1\le J\, :\, \ell_1-j_1 = i_1} x Q_{s,t_0,\ell_1,j_1}(x) \ 
    F_{t_0,t_1;k + i_1,k + i_2}^{[\ge k+1]}(x)  \\
    & \qquad \qquad \qquad \qquad 
    \sum_{0\le \ell_2 \le L, 0\le j_2\le J\, :\, \ell_2-j_2 = -(i + i_2)} x Q_{t_0,t,\ell_2,j_2}(x), \\
    \\ & \qquad \qquad \qquad (k\ge J, \ 0\le i \le \underline d_{s,t,J}).
\end{align*}

This provides again an infinite system of equations for the 
generating function $F_{s,t;k_1,k_2}^{[\ge k]}(x)$, $A_{s,t,k,i}(x)$, and $\overline A_{s,t,k,i}(x)$.
As in the case of a single equation we can consider a finite subsystem for the function
\begin{align*}
     F_{s,t;k_1,k_2}^{[\ge k]}(x)&: \quad   1\le s,t, \le d, \ 0\le k_1 < L+J, \  0\le k_2 < 2J, \ 0\le k \le \min(J,k_1,k_2), \\
     A_{s,t;k,i}(x)&: \quad   1\le s,t, \le d, \  0\le k \le J, \ 0\le i < \overline d_{s,t,k}, \\
     \overline A_{s,t;k,i}(x)&: \quad    1\le s,t, \le d,\ 0\le k \le J, \ 0\le i < \underline d_{s,t,k}, 
\end{align*}
that can be independently solved. Again, all other functions can be expressed 
in a positive polynomial way with these functions (compare Lemma~\ref{Lefinitesystem1}).
This completes the proof of Theorem~\ref{Th1}.

For the proof of Theorem~\ref{Th2} we apply a similar procedure as described in
Section~\ref{sec:proofTh2single}. The modifications are (again) obvious.

Note that we use the assumption that for every pair $(s,t) \in \{1,\ldots, d\}^2$ 
we can reach from every level $k_1$ every level $k_2$ by an allowed lattice path 
of type $(s,t)$ that stays $\ge k$, where $k\ge \min(k_1,k_2)$.
This is equivalent to the statement that the (infinite) system of equations 
(\ref{eqcatalyticsystemsimplified-2}) is strongly connected.
%This condition has to be properly rephrased for Theorem~\ref{Th2}.

\medskip\noindent
{\bf Acknowledgement.} 
The authors are very thankful to Gilles Schaeffer for several discussions on the topic of this paper.

%---------------------------- Bibliography -------------------------------

% You can also use BibTex, of course
%{\small
%\bibliographystyle{plain}
%\bibliography{CyrilMichael25}
%}

\end{document}